\newtheorem{theorem}{Theorem}[section]
\newtheorem{lemma}{Lemma}[section]
\newtheorem{remark}{Remark}[section]
\newtheorem{proposition}{Proposition}[section]
\renewcommand{\theequation}{\arabic{section}.\arabic{equation}}
\renewcommand{\thetheorem}{\arabic{section}.\arabic{theorem}}
\renewcommand{\thelemma}{\arabic{section}.\arabic{lemma}}
\renewcommand{\theproposition}{\arabic{section}.\arabic{proposition}}
\renewcommand{\thealgorithm}{\arabic{section}.\arabic{algorithm}}
\renewcommand{\thefigure}{\arabic{section}.\arabic{figure}}
\newcommand{\sv}{\mathcal{S}^K_{NL}(\Omega)\cup H^1_{\#}(\Omega)}
\newcommand{\hdel}{H_{\#\delta}(\Omega)}
\newcommand{\hs}{\widetilde{H}^s(\Omega)}
\newcommand{\Ne}{N_{\rm e}}
\title{A discontinuous Galerkin scheme for full-potential electronic structure calculations
}
\author{Xiaoxu Li
\thanks{
School of Mathematical Sciences, Beijing Normal University, No.19 Xinjiekouwai Street, Beijing, 100875, P.R.China.
E-mail: {\tt xiaoxuli@mail.bnu.edu.cn}.
Xiaoxu Li's work was partially supported by the National Science Foundation for Young Scientists of China under Grant 11701037.
}
and Huajie Chen
\thanks{School of Mathematical Sciences, Beijing Normal University, No.19 Xinjiekouwai Street, Beijing, 100875, P.R.China.
E-mail: {\tt chen.huajie@bnu.edu.cn}.
Huajie Chen's work was partially supported by the Fundamental Research Funds for the Central Universities of China under Grant 2017EYT22.
}
}
\date{}
\begin{document}
\maketitle

\begin{abstract}
In this paper, we construct an efficient numerical scheme for full-potential electronic structure calculations of periodic systems. 
In this scheme, the computational domain is decomposed into a set of atomic spheres and an interstitial region, and different basis functions are used in different regions:
radial basis functions times spherical harmonics in the atomic spheres and plane waves in the interstitial region. 
These parts are then patched together by discontinuous Galerkin (DG) method.
Our scheme has the same philosophy as the widely used (L)APW methods in materials science, but possesses systematically spectral convergence rate.
We provide a rigorous {\em a priori} error analysis of the DG approximations for the linear eigenvalue problems,
and present some numerical simulations in electronic structure calculations.
\end{abstract}

\section{Introduction}\label{sec-intro} 
\setcounter{equation}{0}

Electronic structure calculations describe the energies and distributions of electrons, which plays a fundamental role in many different fields: 
materials science, biochemistry, solid-state physics, and surface physics. 
Among different electronic structure models, the Kohn-Sham density functional theory (DFT) \cite{martin05} 
so far achieves the best compromise between accuracy and computational cost.
For an $\Ne$-electron system with the presence of $M$ nuclei of charge $Z_k$ and located at ${\bf R}_k\in\mathbb{R}^3$~$(k=1,\cdots,M)$,
Kohn-Sham DFT gives rise to the following nonlinear eigenvalue problems
\begin{eqnarray}\label{eq:eigen}
H_{\Phi}\phi_i=\lambda_i\phi_i,\quad\lambda_1\leq\lambda_2\leq\cdots\leq\lambda_{\Ne},
\end{eqnarray}
with $\Phi=\{\phi_1,\cdots,\phi_{\Ne}\}$ and the Kohn-Sham Hamiltonian
\begin{eqnarray*}
H_{\Phi}=-\frac{1}{2}\Delta+V_{\rm ext}+V_{\rm H}[\rho_{\Phi}]+V_{\rm xc}[\rho_{\Phi}]. 
\end{eqnarray*}
Here, $\displaystyle V_{\rm ext}({\bf x})=-\sum_{k=1}^M\frac{Z_k}{|{\bf x}-{\bf R}_k|}$  is the external potential generated by nuclear attraction,
$\displaystyle V_{\rm H} [\rho_{\Phi}]=\int_{\mathbb{R}^3}\frac{\rho_{\Phi}({\bf y})}{|\cdot-{\bf y}|}d{\bf y}$ and
$ V_{\rm xc}[\rho_{\Phi}]$ are the so-called Hartree potential and exchange-correlation potential, respectively,
with the electron density $\displaystyle \rho_{\Phi}({\bf x})=\sum_{i=1}^{\Ne}|\phi_i({\bf x})|^2$.
A self-consistent field (SCF) iteration algorithm is commonly resorted to for these nonlinear problems. In each iteration, a
Hamiltonian $H_{\tilde{\Phi}}$ is constructed from a trial electronic state $\tilde{\Phi}$, and a linear eigenvalue problem is
then solved to obtain the low-lying eigenvalues and corresponding eigenfunctions. 
The loop continues until self-consistency of the electronic states is achieved.
The efficiency of the algorithm is mainly determined by the discretization of the Hamiltonian, the self-consistent iteration, and the
linear eigensolver. We shall focus ourselves on the discretization method in this paper.

For periodic systems, plane waves with {\it pseudopotentials} are natural methods 
which are simple to implement and give relatively accurate simulations. 
The pseudopotential approximations \cite{martin05} replace singular nuclear attraction potential and 
complicated effects of the motion of core electrons by a smooth potential. 
They give satisfactory results in most cases, but sometimes fail.
The mathematical analysis of the pseudopotential approximations is very rare, and we refer to \cite{blanc17,cances16} for two recent works.
Moreover, the core electrons have to be considered sometimes
and are responsible for many properties. Therefore, the full-potential/all-electron calculations are necessary.

For eigenvalue problems with singular potentials in full-potential calculations,
plane waves are inefficient bases for describing the cusps
at the nuclei positions \cite{fournais02,fournais04, fournais07,hoffmann01}.
In contrast, it is observed that a significant part of the rapid oscillations can be captured by atomic orbitals
such as Gaussians and Slater-type orbitals \cite{herring40,martin05}, which have been widely used in quantum chemistry 
(we refer to \cite{bachmayr14,chen15b} for their numerical analysis).
Therefore, it would be practically efficient to approximate the wavefunction in a crystal by using combinations of
plane waves and appropriate atomic orbitals. Several computational methods using this idea
have been developed, for example,
augmented plane waves (APW) \cite{singh06,slater37}, linearized augmented plane waves (LAPW) \cite{singh06},
and their extensions by including local orbitals (lo), LAPW+lo methods \cite{madsen01,schwarz02,sjostedt00}.
Exploiting the idea of constructing basis functions for different domains separately,  we construct a numerical scheme in this paper.
The smoothly varying parts of the wavefunctions away from the atoms are represented by plane waves,
the rapidly varying parts near the nuclei are represented by radial basis functions times spherical harmonics, 
and the approximations inside and outside the spheres are patched together by DG methods.

The DG framework has been widely used in numerical solutions of partial differential equations and investigated
theoretically in a lot of works (see, e.g., \cite{arnold00,babuska73,buffa08,harriman03,suli00} and references cited therein).
For electronic structure calculations,  we refer to works by Lin et al. \cite{lin12, zhang17},
which constructs basis functions adaptively from the local environment and patches them together in global domain by DG methods.

We further present an {\em a priori} error analysis of our DG approximations for the linear eigenvalue problems.
Thanks to the asymptotic regularity result developed
by Flad et al. \cite{flad08}, we can guarantee smoothness of the wavefunctions on the  domain $[0,R]\times S^2$ in spherical coordinates.
Our analysis for DG approximations is also closely related to the technique used in \cite{antonietti06,harriman03,osborn75,suli00}.
The main theoretical result in this paper is the following superalgebraic convergence rate under certain assumptions
(see Theorem \ref{theo-approximation-rate}):
\begin{eqnarray*}
|\lambda-\lambda_i^{\rm DG}| + \|u_i-u_i^{\rm DG}\|_{\rm DG} \leq  C_s\varrho^{\frac{3}{2}+\epsilon-s}
\qquad \forall~s\in\mathbb{R}^+ ,
\end{eqnarray*}
where $\epsilon>0$ can be arbitrarily small, $\varrho$ denotes the discretization parameters (see \eqref{add-assumption-varrho}), 
and the constant $C_s$ depends only on $s$ and the eigenfunctions.

We shall briefly compare our DG method with other existing full-potential/all-electron methods in electronic structure calculations.
(a)
{\it APW}:
The augmented plane wave (APW) method \cite{slater37} introduces basis functions 
that are plane waves in the interstitial region and radial solutions of Schr\"{o}dinger equations inside the atomic spheres.
A great disadvantage of the APW method is that the basis functions are energy dependent, which results in a
nonlinear eigenvalue problem and must be solved separately for each eigenstate by ``root tracing" technique \cite{martin05} or iteration methods. 
This method is much more complicated to solve than the straightforward linear eigenvalue equations expressed with a fixed basis set, such as plane waves, Gaussians, LAPW (in the following), and our DG schemes.
(b)
{\it LAPW (+lo)}:
The linearized augmented plane wave (LAPW) method \cite{martin05,singh06} is a  linearization of APW, which defines basis functions as 
linear combinations of a radial solution and its energy derivative evaluated at a chosen fixed energy. 
This forms a basis set  adapted to a particular system that is suitable for calculation of all states in an energy ``window". 
The accuracy depends heavily on the choice of energy parameter and the width of the energy window under consideration.
Although the inclusion of additional variational freedoms (the energy derivatives and sometimes local orbitals (lo)
\cite{singh06}) in the LAPW method facilitates the computation for non-spherical symmetric parts
of the potential, there is no proof that it can give solutions of arbitrarily great accuracy for general potentials as our DG scheme.
Here we would like to mention a recent work \cite{drescher14} which uses similar ideas as LAPW+lo and may possess systematical convergence.
(c)
{\it OPW}:
The orthogonalized plane wave (OPW) method \cite{herring40} constructs basis functions by orthogonalizing the plane waves to special local functions around each
nucleus. The ambiguity of this method arises from inaccuracies of the core wave functions, which are not precise eigenfunctions of the given Hamiltonian. Thus, there is always an
uncertainty about the accuracy of OPW results which can not be refined out by more extended calculations.
(d)
{\it PAW/VPAW}:
The projector augmented wave (PAW) method \cite{bloch94} replaces the original eigenvalue problem (with singular potential) by a new one 
with the same eigenvalues but smoother eigenvectors. 
A slightly different method, called variational projector augmented wave (VPAW), was proposed and analyzed recently \cite{blanc17}. 
This new method allows for a better convergence with respect to the number of plane waves. 
But we mention that the PAW method is more of a pseudopotential method. 

The rest part of this paper is organized as follows. 
In Section \ref{sec-problem}, we set up the model problem and present some regularity results.
In Section \ref{sec-discretization}, we introduce a DG discretization scheme,
provide a numerical analysis of the convergence and {\it a priori} error estimates of the DG approximations.
In Section \ref{sec-numerical}, we give some details of the numerical implementations and present some numerical experiments to support our theory.
Finally, we give some concluding remarks.

\section{Preliminary}\label{sec-problem} \setcounter{equation}{0}

Throughout this paper, we shall use $C$ to denote a generic positive constant which may
stand for different values at its different occurrences and is independent of finite dimensional subspaces.
For convenience, the symbol $\lesssim$ will be used and the notation $A\lesssim B$ means that $A\leq CB$ for some generic positive constant $C$.

Let $\mathcal{R}$ be a discrete periodic lattice of $\mathbb{R}^3$, $\Omega$ be the unit cell of the lattice, and $\mathcal{R}^*$ be the dual lattice.
For simplicity, we take $\Omega=[-\frac{D}{2},\frac{D}{2}]^3~(D>0)$, 
$\mathcal{R} = D\mathbb{Z}^3$, and $\mathcal{R}^*=\frac{2\pi}{D}\mathbb{Z}^3$.

For ${\bf k}\in\mathcal{R}^*$, we denote by $e_{\bf k}({\bf r})=|\Omega|^{-1/2} e^{i{\bf k\cdot r}}$ the plane wave
with wavevector ${\bf k}$. The family $\{e_{\bf k}\}_{{\bf k}\in\mathcal{R}^*}$ forms an orthonormal basis set of
\begin{eqnarray*}
L_{\#}^2(\Omega)=\{u\in L^2_{\rm loc}(\mathbb{R}^3)~:~u~{\rm is}~\mathcal{R}{\rm -periodic}\}.
\end{eqnarray*}
For all $u\in L_{\#}^2(\Omega)$, we have
\begin{eqnarray*}
u({\bf r})=\sum_{{\bf k}\in\mathcal{R}^*}\hat{u}_{\bf k} e_{\bf k}({\bf r})\quad{\rm with}\quad \hat{u}_{\bf k}
=(u,e_{\bf k})_{L_{\#}^2(\Omega)}=|\Omega|^{-1/2}\int_{\Omega} u({\bf r})e^{-i{\bf k\cdot r}}d{\bf r}.
\end{eqnarray*}
We introduce the Sobolev spaces of $\mathcal{R}$-periodic functions
\begin{eqnarray*}
H_{\#}^s(\Omega)=\left\{u({\bf r})=\sum_{{\bf k}\in\mathcal{R}^*}\hat{u}_{\bf k} e_{\bf k}({\bf r})~:~
\sum_{{\bf k}\in\mathcal{R}^*}\big(1+|{\bf k}|^2\big)^s |\hat{u}_{\bf k}|^2<\infty
\right\},
\end{eqnarray*}
with $s\in\mathbb{R}^+$.
For $K\in \mathbb{N}^+$, we denote the finite dimensional subspace by
\begin{eqnarray*}
\mathcal{V}_K=\left\{v_K({\bf r})=\sum_{{\bf k}\in\mathcal{R}^*,|{\bf k}|\leq \frac{2\pi}{D}K} c_{\bf k}e_{\bf k}({\bf r}) \right\}.
\end{eqnarray*}
For $v\in H^s_{\#}(\Omega)$, the best approximation of $v$ in $\mathcal{V}_K$ is
$\Pi_K v=\sum_{{\bf k}\in\mathcal{R}^*,|{\bf k}|\leq \frac{2\pi}{D}K} \hat{v}_{\bf k} e_{\bf k}({\bf r})$ for any $H^t$-norm ($t\leq s$).
The more regularity $v$ has,
the faster this truncated series converge to $v$: For real numbers $t$ and $s$ satisfying $t\leq s$, we have that for each
$v\in H^s_{\#}(\Omega)$,
\begin{eqnarray}\label{add-rate-planewave}
\|v-\Pi_K v\|_{H^t_{\#}(\Omega)}=\min_{v_K\in \mathcal{V}_K}\|v-v_K\|_{H^t_{\#}(\Omega)}\lesssim K^{t-s}\|v\|_{H^s_{\#}(\Omega)}.
\end{eqnarray}

As a model problem, we  consider the following Schr\"{o}dinger-type linear eigenvalue problem, which can be viewed as a
linearization of \eqref{eq:eigen}: Find $\lambda\in\mathbb{R}$ and $0\neq u\in H_{\#}^1(\Omega)$ such that $\|u\|_{L_{\#}^2(\Omega)=1}$ and
\begin{eqnarray}\label{eq-model}
a(u,v)=\lambda(u,v) \qquad\forall~v\in H_{\#}^1(\Omega),
\end{eqnarray}
where the bilinear form $a(\cdot,\cdot):H_{\#}^1(\Omega)\times H_{\#}^1(\Omega)\rightarrow\mathbb{C}$ is given by
\begin{eqnarray}\label{bilinear-a}
a(u,v)=\frac{1}{2}\int_{\Omega}\nabla u\cdot\nabla v+\int_{\Omega} Vuv 
\end{eqnarray}
with  a $\mathcal{R}$-periodic potential $V\in L^2_{\#}(\Omega)$.

To represent the wavefunctions separately in different regions, $\Omega$ is divided into atomic spheres 
and an interstitial region (see Figure \ref{fig-division} (left) for decomposition of a single-atom system,
and Figure \ref{fig-division} (right) for similar construction of a two-atom system).

For sake of simplicity, we shall restrict our discussions to a single atom located at the origin,
the algorithms and analysis of which can be easily generalized to multi-atom systems.
Throughout this paper, we shall denote by $\Omega_{\rm out}$ the interstitial region, by
$\Omega_{\rm in}$ the sphere centered at the origin with radius $R$, and by $\Gamma$ the spherical surface.
We also assume throughout this paper that the potential $V$ equals to $-Z/|{\bf r}|$ in the neighborhood of ${\bf 0}$,
and belongs to $C_{\rm loc}^{\infty}(\mathbb{R}^3\setminus\mathcal{R})\cap L^2_{\#}(\Omega)$.

\begin{figure}[htbp]
	\centering
	\subfigure{
	\begin{minipage}{6.0cm}
		\centering
		\includegraphics[width=6.0cm]{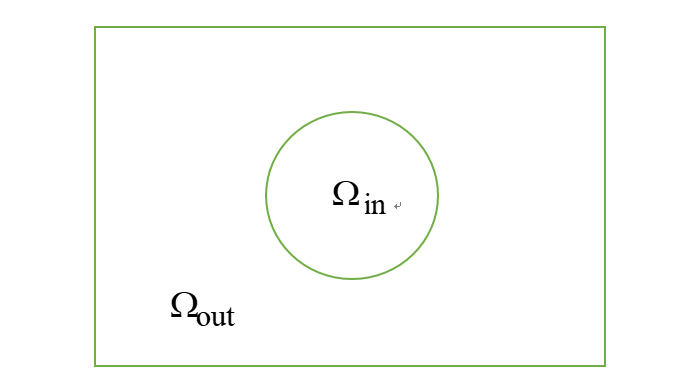}
	\end{minipage}
	}
	\subfigure{
		\begin{minipage}{6.0cm}
			\centering
			\includegraphics[width=6.0cm]{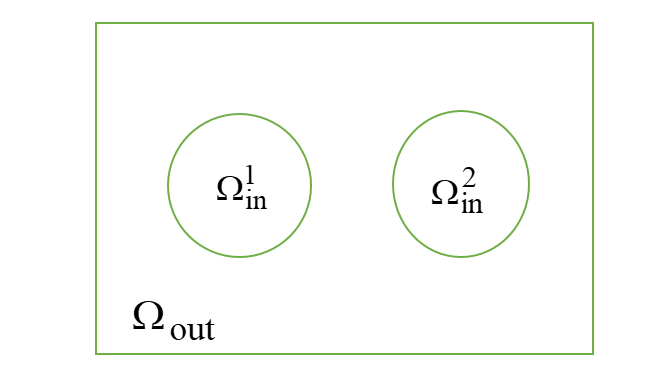}
		\end{minipage}
	}
	\caption{The  division of $\Omega$ into atomic spheres $\Omega_{\rm in}$ and a interstitial region $\Omega_{\rm out}$.}
	\label{fig-division}
\end{figure}

It was shown in \cite{fournais02,fournais04,fournais07} that the exact electron densities are analytic away from the nuclei and satisfy certain cusp conditions at the nuclei. 
The plane wave approximations can not have as good convergence rate
as \eqref{add-rate-planewave} due to the cusps at the nuclear positions.
The following lemma concerning the regularity of eigenfunctions of \eqref{eq-model} is heavily used in our analysis,
the proof of which can be referred to \cite[Theorem 1,4 and Proposition 1]{flad08}.

\begin{lemma}\label{lemma-regularity}
If $u$ is an eigenfunction of \eqref{eq-model}, then $ u\in H^s([0,R]\times S^2) $ for any $s\in\mathbb{Z}^+$.
\end{lemma}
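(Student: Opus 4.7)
The plan is to combine standard interior elliptic regularity away from the origin with the fine-structure results of Flad et al.~\cite{flad08} near the nucleus, which are tailored to exactly this type of spherical-coordinate regularity statement.

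First, I would observe that away from the nuclear position, the potential is smooth: $V\in C^{\infty}_{\rm loc}(\mathbb{R}^3\setminus\mathcal{R})$. Thus on any annular shell $\{\delta\leq|{\bf r}|\leq R\}$ with $\delta>0$, the eigenfunction satisfies an elliptic equation with smooth coefficients, and bootstrapping from $u\in H^1$ gives $u\in C^{\infty}$ there. Pulled back through the polar map $(r,\omega)\mapsto r\omega$, this yields $C^{\infty}$ regularity on $[\delta,R]\times S^2$ to arbitrary order. Hence the entire issue reduces to the behavior as $r\to 0^+$.

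Second, I would analyze the singular behavior at the origin. In spherical coordinates, the eigenvalue equation takes the form
\begin{eqnarray*}
-\frac{1}{2}\left(\partial_r^2+\frac{2}{r}\partial_r+\frac{1}{r^2}\Delta_{S^2}\right)u(r,\omega)-\frac{Z}{r}u+V_{\rm reg}\,u=\lambda u,
\end{eqnarray*}
where $V_{\rm reg}:=V+Z/|{\bf r}|$ is smooth near $\mathbf{0}$. Expanding $u(r,\omega)=\sum_{\ell,m}u_{\ell m}(r)Y_{\ell}^m(\omega)$ in spherical harmonics produces a coupled system of singular ODEs for the radial coefficients with a regular singular point at $r=0$. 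The Frobenius-type analysis carried out in \cite{flad08} (Theorem~1.4 and Proposition~1) shows that each $u_{\ell m}$ admits an expansion in integer powers of $r$ with analytic angular coefficients, and no fractional or logarithmic modes appear for a single Coulomb nucleus. Patching this local description with the $C^{\infty}$ smoothness from Step~1 yields an extension of $u$ to a function in $C^{\infty}([0,R]\times S^2)$, from which $u\in H^s([0,R]\times S^2)$ for every $s\in\mathbb{Z}^+$ follows by compactness of the domain.

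The hard part, which is precisely what \cite{flad08} provides, is ruling out the non-smooth modes at $r=0$ in Step~2. This requires tracking how the angular Laplacian $r^{-2}\Delta_{S^2}$ and the Coulomb coupling $-Z/r$ interact through the indicial equation of the radial system, and then invoking the Kato cusp condition to exclude any exponent that would spoil smoothness in $r$. Once this input is granted, the remainder of the argument is essentially a repackaging of standard elliptic regularity into the product-space coordinates $(r,\omega)\in[0,R]\times S^2$.
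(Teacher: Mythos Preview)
Your proposal is correct and aligns with the paper's own treatment: the paper does not give an independent proof of this lemma but simply refers the reader to \cite[Theorem~1,4 and Proposition~1]{flad08}, which is precisely the source you invoke for the hard step at $r=0$. Your sketch is in fact more detailed than what the paper provides, since you spell out the decomposition into an interior elliptic regularity step on annular shells and the Frobenius-type analysis near the origin, whereas the paper leaves the entire argument to the cited reference.
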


The following lemma states the relationship between two Sobolev norms.

\begin{lemma}\label{lemma-H1norm}
If $v\in H^1(\Omega_{\rm in})\bigcap H^3([0,R]\times S^2)$, then there exits a constant $C_R$ depending on $R$ such that
\begin{eqnarray*}
\|v\|_{H^1(\Omega_{\rm in})}\leq C_R\|v\|_{H^1([0,R]\times S^2)}.
\end{eqnarray*}
\end{lemma}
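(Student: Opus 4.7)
}
The plan is to carry out the computation in spherical coordinates $(r,\theta,\phi)$ and exploit the fact that the Jacobian $r^2\sin\theta$ of the change of variables exactly compensates the inverse powers of $r$ and $\sin\theta$ appearing in the spherical expression of the Cartesian gradient. Since by hypothesis $v\in H^1(\Omega_{\rm in})$, both sides make sense and the change of variables is justified (the extra $H^3([0,R]\times S^2)$ regularity guarantees that $v$ is well-defined, in particular at the coordinate singularity $r=0$, so no subtle issue arises from the spherical parametrization).

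First I would split
\begin{equation*}
\|v\|_{H^1(\Omega_{\rm in})}^2 = \|v\|_{L^2(\Omega_{\rm in})}^2 + \|\nabla v\|_{L^2(\Omega_{\rm in})}^2
\end{equation*}
and rewrite each piece using $dV=r^2\sin\theta\, dr\, d\theta\, d\phi$. For the $L^2$ part, bounding $r^2\le R^2$ immediately yields $\|v\|_{L^2(\Omega_{\rm in})}^2\le R^2\|v\|_{L^2([0,R]\times S^2)}^2$, where the right-hand side uses the natural product measure $dr\,d\sigma$ on $[0,R]\times S^2$.

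For the gradient part I would insert the spherical expression
\begin{equation*}
|\nabla v|^2 = |\partial_r v|^2 + \frac{1}{r^2}|\partial_\theta v|^2 + \frac{1}{r^2\sin^2\theta}|\partial_\phi v|^2,
\end{equation*}
multiply by the Jacobian $r^2\sin\theta$, and observe that the $r^{-2}$ factors cancel against $r^2$ while the $\sin^{-2}\theta$ cancels against one power of $\sin\theta$. The angular contribution then reassembles exactly into the $S^2$-gradient integrated against the round measure $d\sigma$ on $S^2$, i.e. $\int_0^R\|\nabla_{S^2}v(r,\cdot)\|_{L^2(S^2)}^2\,dr$. The radial contribution becomes $\int_0^R\int_{S^2}r^2|\partial_r v|^2\,d\sigma\,dr$, which is again bounded by $R^2\|\partial_r v\|_{L^2([0,R]\times S^2)}^2$ using $r\le R$.

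Adding these estimates gives $\|v\|_{H^1(\Omega_{\rm in})}^2 \le \max(R^2,1)\,\|v\|_{H^1([0,R]\times S^2)}^2$, which is the claim with $C_R=\max(R,1)$. There is no real obstacle here: the proof is essentially a bookkeeping exercise in spherical coordinates, and the only point requiring attention is to make sure the definition of $H^1([0,R]\times S^2)$ is the one with product measure $dr\,d\sigma$ on the cylinder $[0,R]\times S^2$, so that the cancellations described above do line up correctly.
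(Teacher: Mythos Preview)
Your argument is correct and in fact a bit more direct than the paper's own proof. The paper does not expand $|\nabla v|^2$ in spherical coordinates; instead it integrates by parts to write $\|v\|_{H^1(\Omega_{\rm in})}^2=-\int_{\Omega_{\rm in}}v\Delta v+\int_\Gamma v\,\partial_r v\big|_{r=R}+\int_{\Omega_{\rm in}}v^2$, then uses the spherical decomposition $\Delta=r^{-2}\partial_r(r^2\partial_r)+r^{-2}\Delta_{S^2}$ together with the condition $\lim_{r\to 0}r^2v\,\partial_r v=0$ (which is where the $H^3([0,R]\times S^2)$ hypothesis is explicitly invoked) to kill the boundary contribution at the origin; after a second radial integration by parts one recovers exactly the two terms $\int_0^R r^2\,dr\int_{S^2}(v^2+(\partial_r v)^2)$ and $-\int_0^R dr\int_{S^2}v\Delta_{S^2}v=\int_0^R\|v(r,\cdot)\|_{H^1(S^2)}^2\,dr$ that you obtain directly. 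Your route avoids the integration by parts altogether and therefore sidesteps the need to check the vanishing of the interior boundary term; the trade-off is that the paper's computation makes the role of the extra regularity assumption transparent, whereas in your version it only appears as a qualitative remark about well-posedness of the spherical parametrization.
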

\begin{proof}
Note that in spherical coordinates
\begin{eqnarray*}
\Delta=\frac{1}{r^2}\frac{\partial}{\partial r}\left( r^2\frac{\partial}{\partial r} \right)
+\frac{1}{r^2\sin\theta}\frac{\partial}{\partial\theta}\left( \sin\theta\frac{\partial}{\partial\theta} \right)
+\frac{1}{r^2\sin^2\theta}\frac{\partial^2}{\partial^2\phi},
\end{eqnarray*}
where the last two terms multiplied by $r^2$ is the total angular momentum operator $\Delta_{S^2}$ on spherical
surface, i.e. the Laplace-Beltrami operator. 

Since $v\in H^1(\Omega_{\rm in})\bigcap H^3([0,R]\times S^2)$ implies
\begin{eqnarray}\label{proof-0-1}
\lim_{r\rightarrow 0} r^2 v\frac{\partial v}{\partial r}=0,
\end{eqnarray}
we have
\begin{eqnarray*}
\|v\|^2_{H^1(\Omega_{\rm in})} &=& -\int_{\Omega_{\rm in}}v\Delta v+\int_{\Gamma}v\left.\frac{\partial v}{\partial r}
\right|_{r=R}+\int_{\Omega_{\rm in}}v^2 \\
&=& \int_0^R r^2dr 
\int_{S^2} \left(v^2+(\frac{\partial v}{\partial r})^2 \right)
-\int_0^R dr\int_{S^2}(v\Delta_{S^2}v)\\ \nonumber
&\leq& R^2\int_{S^2}\|v\|^2_{H^1([0,R])} + \int_0^R \|v\|^2_{H^1(S^2)}dr\\
&\leq& C_{R}\|v\|^2_{H^1([0,R]\times S^2)},
\end{eqnarray*}
where Green's formula and \eqref{proof-0-1} are used for the second equality.
\end{proof}

\section{DG discretization}\label{sec-discretization} \setcounter{equation}{0}

In this section, we construct a DG discretization scheme using radial basis functions times spherical harmonics inside the sphere
and plane waves outside. 
We provide an {\it a priori} error analysis of the numerical approximations.
Our analysis is composed of three steps: 
first, we estimate the best approximation errors inside and outside the sphere separately;
then we give an error estimate for the DG approximation of the corresponding source problem;
finally, we derive an error estimate for the eigenvalue problem.
Note that the errors generated by numerical quadratures and linear algebraic
solvers are not considered in this paper, which deserve separate investigations.

Note that if $u$ is an eigenfunction of \eqref{eq-model},  then we have from Lemma \ref{lemma-regularity} that for any $s>0$, 
$u|_{\Omega_{\rm in}}\in H^s([0,R]\times S^2) $ in spherical coordinates and $u|_{\Omega_{\rm out}}\in H^s(\Omega_{\rm out})$. 
We can therefore introduce the following space 
\begin{eqnarray*}
	\widetilde{H}^s(\Omega)=\left\{v\in H^1_{\#}(\Omega):v|_{\Omega_{\rm in}}\in H^s([0,R]\times S^2),v|_{\Omega_{\rm out}}\in H^s(\Omega_{\rm out})\right\}
\end{eqnarray*}
with induced norm 
\begin{eqnarray*}
\|v\|_{\hs}:=\|v\|_{H^s(\Omega_{\rm out})} + \|v\|_{H^s([0,R]\times S^2)}.
\end{eqnarray*}

\subsection{Approximation space}\label{subsec-space}

Denote by $\mathcal{P}_K(\Omega_{\rm out})$ the space of functions on $\Omega_{\rm out}$ expanded by plane waves
\begin{eqnarray*}
\mathcal{P}_K(\Omega_{\rm out})=\left\{u\in H^1(\Omega_{\rm out}),~u({\bf r})
=\sum_{|{\bf k}|\leq \frac{2\pi}{D}K} c_{\bf k}e_{\bf k}({\bf r}) \bigg|_{\Omega_{\rm out}} \right\}	
\end{eqnarray*}
and by $\mathcal{B}_{NL}$ the space of functions on $\Omega_{\rm in}$ expanded by radial basis functions times spherical harmonics
\begin{eqnarray*}
\mathcal{B}_{NL}(\Omega_{\rm in})=\left\{u\in H^1(\Omega_{\rm in}),~u({\bf r})=\check{u}(r,\theta,\phi)
=\sum_{0\leq n\leq N,0\leq l\leq L,|m|\leq l} c_{nlm}\chi_n(r)Y_{lm}(\theta,\phi) \bigg|_{\Omega_{\rm in}} \right\},
\end{eqnarray*}
where $\{\chi_n\}_{n=0}^N$ are basis functions on $[0,R]$. 
Here, we denote by $\check{u}(r,\theta,\phi)$ the spherical coordinate representations of the function $u({\bf r})$,
i.e., $u({\bf r})=\check{u}(r,\theta,\phi)$.

For simplicity, we may assume that the radial basis functions $\{\chi_n\}$  are polynomials that span the
space of all polynomials of degree no greater than $N$.

\begin{remark}
There are many choices of the radial basis functions $\{\chi_n\}$.
One can use spectral methods,
such as Legendre polynomials, Chebyshev polynomials and Jacobi polynomials, et al. 
These types of functions form a complete basis set on $[0,R]$, and possess spectral convergence rates for any sufficiently smooth function \cite{canuto88}.
	
Another type of basis set is atomic orbitals \cite{lebris03,martin05}, such as Gaussians, Slater-type orbitals
and numerical solutions of radial Schr\"{o}dinger equations \cite{lebris03,martin05}.
These basis functions are closely related to physical problems, and can be very efficient in practice. 
In some cases, we can rigorously derive their convergence rates \cite{bachmayr14, chen15b}.
	
For simplicity of presentations, we shall focus our analysis on the polynomial-type radial basis functions and investigate the atomic orbitals as well in numerical experiments.
\end{remark}

Define the finite dimensional space
\begin{align*}
\mathcal{S}^K_{NL}(\Omega) :&=
\mathcal{P}_K(\Omega_{\rm out})\oplus\mathcal{B}_{NL}(\Omega_{\rm in})
\\
&= \left\{ u\in L^2_{\#}(\Omega),~u|_{\Omega_{\rm in}}\in\mathcal{B}_{NL}(\Omega_{\rm in})
\mbox{ and } u|_{\Omega_{\rm out}}\in\mathcal{P}_K(\Omega_{\rm out}) \right\}.
\end{align*}
Throughout this paper, we may assume that there exists a constant $\varrho$ such that
\begin{eqnarray}\label{add-assumption-varrho}
\max\{K,N,L\} \leq \varrho \leq C\min\{K,N,L\},
\end{eqnarray}
which can denote the discretization parameters.

In the following, we shall define some ``best" approximations of the function in the interstitial region and atomic spheres respectively.

For the interstitial region, 
we define the projections
$P_K:L^2_{\#}(\Omega_{\rm out})\rightarrow \mathcal{P}_K(\Omega_{\rm out})$ satisfying
\begin{eqnarray*}
\|u-P_K u\|_{H^1(\Omega_{\rm out})}=\inf_{U^{\rm out}_K\in \mathcal{P}_K(\Omega_{\rm out})}\|u-U^{\rm out}_K\|_{H^1(\Omega_{\rm out})} .
\end{eqnarray*}
%

\begin{proposition}\label{propositon-planewave}
	If $u\in H^s(\Omega_{\rm out}) $, then for  $0\le t<s$, there exists a constant $C$ such that
	\begin{eqnarray}\label{rate-planewave}
	\|u-P_K u\|_{H^t(\Omega_{\rm out})}\leq CK^{t-s}\|u\|_{H^s(\Omega_{\rm out})}.
	\end{eqnarray}
\end{proposition}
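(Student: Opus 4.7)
The plan is to reduce the approximation problem on $\Omega_{\rm out}$ to the standard plane-wave approximation on the full periodic cell $\Omega$, for which the rate \eqref{add-rate-planewave} already holds. The key device is a bounded Sobolev extension, which lets us test $P_K u$ against the restriction of the global truncated Fourier series.

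First I would construct a bounded linear extension $E\colon H^s(\Omega_{\rm out})\to H^s_{\#}(\Omega)$ with $(Eu)|_{\Omega_{\rm out}}=u$ and $\|Eu\|_{H^s_{\#}(\Omega)}\lesssim\|u\|_{H^s(\Omega_{\rm out})}$. In the torus setting, the only genuine boundary of $\Omega_{\rm out}$ is the smooth sphere $\Gamma$; the periodic cube boundary identifies itself with no corner issue, so a Stein-type extension across $\Gamma$ (combined with a smooth cutoff supported in $\Omega_{\rm in}$) suffices. Applying \eqref{add-rate-planewave} to $Eu$ then gives $\|Eu-\Pi_K Eu\|_{H^t_{\#}(\Omega)}\lesssim K^{t-s}\|u\|_{H^s(\Omega_{\rm out})}$, and the restriction $(\Pi_K Eu)|_{\Omega_{\rm out}}$ automatically belongs to $\mathcal{P}_K(\Omega_{\rm out})$, providing a concrete competitor against which $P_K u$ can be compared.

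For $t=1$ this is essentially the whole story: the $H^1$-best-approximation definition of $P_K$ immediately yields
\begin{equation*}
\|u-P_K u\|_{H^1(\Omega_{\rm out})}\le\|Eu-\Pi_K Eu\|_{H^1(\Omega_{\rm out})}\le\|Eu-\Pi_K Eu\|_{H^1_{\#}(\Omega)}\lesssim K^{1-s}\|u\|_{H^s(\Omega_{\rm out})}.
\end{equation*}
For $0\le t<1$ I would close the gap with an Aubin--Nitsche duality. Testing against $g$ in the dual space, let $w\in H^1(\Omega_{\rm out})$ solve the auxiliary Neumann-type problem $(w,v)_{H^1(\Omega_{\rm out})}=\langle g,v\rangle$ with regularity $\|w\|_{H^{2-t}(\Omega_{\rm out})}\lesssim\|g\|_{H^{-t}(\Omega_{\rm out})}$. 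Since $P_K$ is by construction $H^1$-orthogonal onto $\mathcal{P}_K(\Omega_{\rm out})$, we may replace $w$ by $w-v_K$ for any $v_K\in\mathcal{P}_K(\Omega_{\rm out})$; choosing $v_K$ as the $H^1$-best approximant of $w$ and invoking the $t=1$ case already established with regularity index $2-t$ supplies the extra factor $K^{-(1-t)}$, producing the claimed rate $K^{t-s}$.

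The main obstacle I anticipate is the duality step, which requires $H^{2-t}$ elliptic regularity for a Neumann problem on the torus-with-a-ball-removed domain $\Omega_{\rm out}$. Although $\Gamma$ is smooth and periodicity removes any corner issue, one still has to verify that standard regularity theory applies so that the dual solution has the stated smoothness. Constructing the extension $E$ with the correct norm bound uniformly in $s$ is routine but also deserves care; and for completeness one could handle $t>1$ via an inverse inequality $\|v_K\|_{H^t(\Omega_{\rm out})}\lesssim K^{t-1}\|v_K\|_{H^1(\Omega_{\rm out})}$ on $\mathcal{P}_K(\Omega_{\rm out})$ combined with a triangle inequality against $(\Pi_K Eu)|_{\Omega_{\rm out}}$.
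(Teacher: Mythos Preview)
Your core strategy---extend $u$ from $\Omega_{\rm out}$ to the full periodic cell and then invoke the global plane-wave rate \eqref{add-rate-planewave}---is exactly what the paper does. The only difference in the extension step is that the paper writes it down explicitly for this geometry: expand $u$ near $\Gamma$ in spherical harmonics $u=\sum_{lm}u_{lm}(r)\tilde Y_{lm}$ and define inside the ball a reflected radial profile $\varphi_{lm}(r)=\tau(r)\sum_{n=1}^{s+1}c_n\,u_{lm}\bigl(R+\tfrac{1}{n}(R-r)\bigr)$ with the $c_n$ chosen so that $s$ normal derivatives match across $\Gamma$, together with a cutoff $\tau$. Your appeal to an abstract Stein extension accomplishes the same thing without the computation.

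Where your proposal goes beyond the paper is in distinguishing the $H^t$-best approximation from the specific $H^1$-projection $P_K u$. The paper's proof ends after bounding $\inf_{U_K}\|u-U_K\|_{H^t(\Omega_{\rm out})}$, which equals $\|u-P_Ku\|_{H^t(\Omega_{\rm out})}$ only when $t=1$; for $t\neq 1$ the paper tacitly identifies the two. Your Aubin--Nitsche step actually closes this gap, and the regularity concern you flag is mild since $\Gamma$ is a smooth sphere, so the Neumann problem $-\Delta w+w=g$ on $\Omega_{\rm out}$ (periodic on the outer faces) enjoys standard $H^{2-t}$ regularity. A slightly cleaner route than working directly with fractional dual spaces is to run the duality only for $t=0$ and then interpolate between the $L^2$ and $H^1$ estimates.
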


\begin{proof}
	The proof is similar to that of \cite[Proof of Lemma 3.1]{chen15a}.
	We keep this proof for sake of completeness.
	
	We shall first extend the function $u|_{\Omega_{\rm out}}$ smoothly into the sphere.
	The wavefunction around the sphere can be represented by
	\begin{eqnarray*}\label{eq-wavefunction}
		u({\bf r})=\sum_{lm}^{\infty}u_{lm}(r)\tilde{Y}_{lm}({\bf r})
	\end{eqnarray*}
	with $\displaystyle u_{lm}(r)=\int_0^{\pi}\sin\theta\int_0^{2\pi}u(r,\theta,\phi)Y_{lm}(\theta,\phi) d\phi d\theta$, where we use spherical coordinates ${\bf r}\rightarrow (r,\theta,\phi)$
	to express $\tilde{Y}_{lm}({\bf r})=Y_{lm}(\theta,\phi)$ for spherical harmonics on $S^2$.
	Then we can define
	\begin{eqnarray}\label{eq-extension-phi}
	\tilde{u}({\bf r})=\left\{ \begin{array}{ll} u({\bf r}) & \mbox{in }\Omega_{\rm out}, \\[1ex]\displaystyle
	\sum_{lm}^{\infty}\varphi_{lm}(r)\tilde{Y}_{lm}({\bf r}) & \mbox{in }\Omega_{\rm in}, \end{array} \right.
	\end{eqnarray}
	where $\displaystyle \varphi_{lm}(r)=\tau(r)\sum_{n=1}^{s+1}c_n u_{lm}(R+\frac{1}{n}(R-r))$ with
	$\displaystyle \sum_{n=1}^{s+1}(-\frac{1}{n})^k c_n=1~(k=0,1,\cdots,s)$,
	$\tau\in C^{\infty}([0,R])$ satisfying $\tau=0$ on $[0,\frac{R}{3}]$ and $\tau=1$ on $[\frac{2R}{3},R]$.
	We observe that $u\in H^s(\Omega_{\rm out})$ leads to $\tilde{u}\in H^s(\Omega)$ and moreover
	\begin{eqnarray}\label{eq-extension-norm}
	\|\tilde{u}\|_{H^s(\Omega)} \leq\beta\|u\|_{H^s(\Omega_{\rm out})},
	\end{eqnarray}
	where the constant $\beta$ is only related to $s,~R$ and $\|\tau\|_{C^{\infty}([0,R])}$.
	Let 
	\begin{align*}
	\tilde{u}_K=\sum_{|{\bf k}|\leq \frac{2\pi}{D}K }\tilde{c}_{\bf k}e_{\bf k}
	\qquad {\rm with} \qquad 
	\tilde{c}_{\bf k} =\int_{\Omega}\tilde{u}({\bf r})e^{*}_{\bf k}({\bf r})d{\bf r} ,
	\end{align*}
	we have from \eqref{eq-extension-norm} that
	\begin{align*}
	\inf_{U^{\rm out}_K\in\mathcal{P}_K(\Omega_{\rm out})}\|u-U^{\rm out}_K\|_{H^t(\Omega_{\rm out})} 
	&\leq \|u-\tilde{u}_K\|_{H^t(\Omega_{\rm out})}
	~\leq~\|\tilde{u}-\tilde{u}_K\|_{H^t(\Omega)} \\
	&\leq CK^{t-s}\|\tilde{u}\|_{H^s(\Omega)} ~\leq~C\beta K^{t-s}\|u\|_{H^s(\Omega_{\rm out})},
	\end{align*}
	which completes the proof of \eqref{rate-planewave}.
\end{proof}


For the atomic spheres, we
define
$P_N:H^1([0,R])\rightarrow\Psi_N \equiv {\rm span}\{\chi_n\}_{n=1}^{N} $ satisfying
\begin{eqnarray*}
\|v-P_N v\|_{H^1([0,R])}=\inf_{\psi_N\in\Psi_N}\|v-\psi_N\|_{H^1([0,R])},
\end{eqnarray*}
and $P_L:L^2(S^2)\rightarrow \mathcal{Y}_L \equiv {\rm span}\{Y_{lm},0\leq l\leq L,-l\leq m\leq l\}$ satisfying
\begin{eqnarray*}
P_L\varphi(\theta,\phi)=\sum_{l=0}^L\sum_{m=-l}^l \hat{\varphi}_{lm}Y_{lm}(\theta,\phi) \quad{\rm with}\quad
\hat{\varphi}_{lm}=\int_0^{\pi}\sin\theta \int_0^{2\pi}\varphi(\theta,\phi)Y^*_{lm}(\theta,\phi) d\phi d\theta.
\end{eqnarray*}
For $P_N$ and $P_L$, we have the following standard estimates (see, e.g., \cite{atkinson09, maday81})
\begin{gather*}
\|v-P_N v\|_{H^t([0,R])} \leq CN^{t-s}\|v\|_{H^s([0,R])}, 
\\[1ex]
\|\varphi-P_L\varphi\|_{H^t(S^2)} \leq CL^{t-s}\|\varphi\|_{H^s(S^2)}
\end{gather*}
for any $0\le t\le 1$ and $t<s$.
Define the projection $P_{NL}:H^1([0,R]\times S^2)\rightarrow \Psi_N\times\mathcal{Y}_L$ by $P_{NL}=P_N\circ P_L$,
we have that for $w\in H^s([0,R]\times S^2)$, $0\le t\le 1$ and $t<s$,
\begin{eqnarray}\label{rate-P}
\|w-P_{NL}w\|_{H^t([0,R]\times S^2)}\leq C(L^{t-s}+N^{t-s})\|w\|_{H^s([0,R]\times S^2)}.
\end{eqnarray}

\begin{proposition}\label{proposition-radial}
If $u\in H^s([0,R]\times S^2)\bigcap H^1(\Omega_{\rm in})$, then for $0\le t\le 1$  and any $s\geq3$, 
there exists a constant $C$ such that
\begin{eqnarray}\label{rate-sphere}
\|u-P_{NL}u\|_{H^t(\Omega_{\rm in})}  \leq C
(L^{t-s}+N^{t-s})\|u\|_{H^s([0,R]\times S^2)}.
\end{eqnarray}
\end{proposition}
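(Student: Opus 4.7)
The plan is to reduce the estimate on the Euclidean ball $\Omega_{\rm in}$ to the analogous estimate on the spherical-coordinate cylinder $[0,R]\times S^2$, where \eqref{rate-P} already provides the rate. The bridge between the two norms is precisely Lemma \ref{lemma-H1norm}.

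First I would set $v:=u-P_{NL}u$ and verify the regularity needed to apply Lemma \ref{lemma-H1norm}. Since $P_{NL}u\in\mathcal{B}_{NL}(\Omega_{\rm in})\subset H^1(\Omega_{\rm in})$ and, viewed in spherical coordinates, is a finite combination of polynomials in $r$ times spherical harmonics, we have $v\in H^1(\Omega_{\rm in})$. The hypothesis $s\geq 3$ together with the smoothness of $P_{NL}u$ in $(r,\theta,\phi)$ also guarantees $v\in H^3([0,R]\times S^2)$, so Lemma \ref{lemma-H1norm} applies to $v$.

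Second, I would handle the two endpoints separately. For $t=1$, Lemma \ref{lemma-H1norm} combined with \eqref{rate-P} gives
\begin{eqnarray*}
\|v\|_{H^1(\Omega_{\rm in})}\leq C_R\|v\|_{H^1([0,R]\times S^2)}\leq C(L^{1-s}+N^{1-s})\|u\|_{H^s([0,R]\times S^2)}.
\end{eqnarray*}
For $t=0$, a direct change of variables in spherical coordinates yields
\begin{eqnarray*}
\|v\|_{L^2(\Omega_{\rm in})}^2=\int_0^R r^2\int_{S^2}|v|^2\,d\sigma\,dr\leq R^2\|v\|_{L^2([0,R]\times S^2)}^2,
\end{eqnarray*}
which combined with \eqref{rate-P} at $t=0$ gives the bound with exponent $-s$.

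Third, for the remaining range $0<t<1$, I would conclude by standard interpolation between $L^2(\Omega_{\rm in})$ and $H^1(\Omega_{\rm in})$: since the two endpoint bounds agree in structure (both are of the form $C(L^{\tau-s}+N^{\tau-s})\|u\|_{H^s([0,R]\times S^2)}$ with $\tau\in\{0,1\}$), real or complex interpolation with parameter $t$ reproduces the claimed estimate.

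The main obstacle is purely the regularity check: one must be sure that $v=u-P_{NL}u$ satisfies the hypothesis $v\in H^3([0,R]\times S^2)$ of Lemma \ref{lemma-H1norm}, since otherwise the vanishing of the boundary term at $r=0$ used in the proof of that lemma (see \eqref{proof-0-1} in the excerpt) cannot be justified. This is precisely why the assumption $s\geq 3$ appears in the statement; everything else is a matter of combining the already-proved ingredients.
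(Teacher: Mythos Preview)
Your proposal is correct and follows essentially the same route as the paper: reduce the $H^t(\Omega_{\rm in})$ norm of $u-P_{NL}u$ to the corresponding norm on $[0,R]\times S^2$ via Lemma~\ref{lemma-H1norm}, then invoke \eqref{rate-P}. The paper's proof is a single line that asserts $\|u-P_{NL}u\|_{H^t(\Omega_{\rm in})}\le\|u-P_{NL}u\|_{H^t([0,R]\times S^2)}$ for all $0\le t\le1$ directly; you have filled in what the paper leaves implicit by treating the endpoints $t=0$ (elementary change of variables) and $t=1$ (Lemma~\ref{lemma-H1norm}) separately and then interpolating, and by explicitly checking that $s\ge 3$ is what guarantees $u-P_{NL}u\in H^3([0,R]\times S^2)$ so that Lemma~\ref{lemma-H1norm} applies.
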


\begin{proof}	
Using \eqref{rate-P} and Lemma \ref{lemma-H1norm}, we have
\begin{eqnarray*}
\|u-P_{NL}u\|_{H^t(\Omega_{\rm in})} \leq
\|u-P_{NL}u\|_{H^t([0,R]\times S^2)} \leq
C(L^{t-s}+N^{t-s})\|u\|_{H^s([0,R]\times S^2)},
\end{eqnarray*}
which completes the proof.
\end{proof}

\subsection{DG approximations of the source problem}
\label{subsec-bvP}

We shall discuss the DG discretization for the source problem and our analysis is related to the framework in \cite{antonietti06}.

For vector-valued ${\bf w}$ and scalar-valued function $u$ which are not continuous on
the spherical surface $\Gamma$, we define the jumps by
\begin{eqnarray*}
[{\bf w}]={\bf w}^+ \cdot {\bf n}^+ +{\bf w}^- \cdot {\bf n}^-,\quad\quad [u]=u^+{\bf n}^+
+u^-{\bf n}^-
\end{eqnarray*}
and the averages by
\begin{eqnarray*}
\{{\bf w}\}=\frac{1}{2}({\bf w}^+ +{\bf w}^-),\quad\quad \{u\}=\frac{1}{2}(u^++u^-),
\end{eqnarray*}
where ${\bf w}^{\pm}$ and $u^{\pm}$ are traces of ${\bf w}$ and $u$ on $\Gamma$ taken from inside
and outside the sphere, ${\bf n}^{\pm}$ are the normal unit vectors.

Since $\mathcal{S}^K_{NL}(\Omega)$ is a finite dimensional space, 
there exists a constant $\gamma_\varrho$ depending on $\varrho$ such that
the following inverse estimate holds 
\begin{eqnarray}\label{finite-dimensional-norm}
\|u^{+}-u^{-}\|_{H^\frac{1}{2}(\Gamma)}\leq \gamma_\varrho\|u^{+}-u^{-}\|_{L^2(\Gamma)}\qquad\forall u\in\mathcal{S}^K_{NL}(\Omega). 
\end{eqnarray}
In our analysis, we assume 
\begin{eqnarray}\label{eq-inverse-H1-assumption}
\|u^{+}-u^{-}\|_{H^1(\Gamma)}\lesssim \varrho^{2}\|u^{+}-u^{-}\|_{L^2(\Gamma)}\quad\forall u\in\mathcal{S}^K_{NL}(\Omega).
\end{eqnarray}
We are not able to justify \eqref{eq-inverse-H1-assumption} rigorously, 
however, we provide some numerical experiments in \ref{sec:inverse-estimate} to show that it could be true.
Then we get from the ``interpolation" arguments (see \ref{sec:inverse-estimate}) that
\begin{eqnarray}\label{inverse-estimate}
\gamma_\varrho = C_{\epsilon} \varrho^{1+\epsilon} 
\end{eqnarray} 
for some $\epsilon\in(0,1)$.
%

We then define the bilinear form $a^{\rm DG}(\cdot,\cdot):\left(\sv\right) \times \left(\sv\right) \rightarrow\mathbb{C}$ by
\begin{eqnarray}\label{eq-bilinear-DG}
\nonumber
a^{\rm DG}(u,v) &=& \int_{\Omega_{\rm in}}\left(\frac{1}{2}\nabla u\cdot\nabla v+Vuv\right)
+\int_{\Omega_{\rm out}}\left(\frac{1}{2}\nabla u\cdot\nabla v+Vuv\right)\\[1ex]
&& -\frac{1}{2}\int_{\Gamma}\{\nabla u\}\cdot[v]ds - \frac{1}{2}\int_{\Gamma}\{\nabla v\}\cdot[u]ds
+ \int_{\Gamma}\sigma[u]\cdot[v]ds,
\end{eqnarray}
where $\sigma=C_{\sigma}\varrho^{2+2\epsilon}$ is the discontinuity-penalization parameter with a constant $C_{\sigma}$ independent of the discretization.

Note that there are many other types of DG formulations (see, e.g., \cite{antonietti06,arnold00}), 
and \eqref{eq-bilinear-DG} is the classical symmetric interior penalty (SIP) method \cite{antonietti06,arnold82,lin12}.

We further define the broken Sobolev space
\begin{align*}
\hdel = \left\{v\in L^2_{\#}(\Omega):
~v|_{\Omega_{\rm in}}\in H^1(\Omega_{\rm in}),~v|_{\Omega_{\rm out}}\in H^1(\Omega_{\rm out})\right\} 
\end{align*}
equipped with the following DG-norm
\begin{eqnarray}\label{eq-norm-DG}
\|u\|^2_{\rm DG}=\|u\|^2_{H^1(\Omega_{\rm in})}+\|u\|^2_{H^1(\Omega_{\rm out})}+\sigma\|[u]\|^2_{L^2(\Gamma)}.
\end{eqnarray}

\begin{lemma}
If $C_{\sigma}$ is sufficiently large, then there exist constants $\alpha,\beta>0$ such that
\begin{eqnarray}\label{coercive}
a^{\rm DG}(u,u)\geq \alpha\|u\|^2_{\rm DG}-\beta\|u\|^2_{L_{\#}^2(\Omega)}
\qquad \forall~u\in \sv .
\end{eqnarray}
\end{lemma}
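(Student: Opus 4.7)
The plan is to run the standard symmetric interior penalty coercivity argument \cite{antonietti06,arnold82,arnold00} adapted to the present two-region setting, with one extra step to absorb the singular potential. Writing
$$a^{\rm DG}(u,u)=\tfrac12\|\nabla u\|^2_{L^2(\Omega_{\rm in})}+\tfrac12\|\nabla u\|^2_{L^2(\Omega_{\rm out})}+\int_{\Omega}Vu^2 - \int_{\Gamma}\{\nabla u\}\cdot[u]\,ds + \sigma\|[u]\|^2_{L^2(\Gamma)},$$
the goal is to bound the potential integral and the consistency integral from below so that only a fraction of the volume gradient seminorms and of the penalty is consumed. If $u\in H^1_\#(\Omega)$ then $[u]=0$ on $\Gamma$ and only the potential estimate is needed; the nontrivial case is $u\in\mathcal{S}^K_{NL}(\Omega)$, where $u|_{\Omega_{\rm in}}$ and $u|_{\Omega_{\rm out}}$ are each smooth and the interface integrals are well-defined.

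For the potential, $V\in L^2_\#(\Omega)$ together with H\"older's inequality on each subdomain, the Sobolev embedding $H^1\hookrightarrow L^4$ on each Lipschitz subdomain, the interpolation $\|u\|^2_{L^4}\lesssim\|u\|^{3/2}_{H^1}\|u\|^{1/2}_{L^2}$, and Young's inequality yield
$$\Bigl|\int_{\Omega}Vu^2\Bigr|\leq \eta_1\bigl(\|u\|^2_{H^1(\Omega_{\rm in})}+\|u\|^2_{H^1(\Omega_{\rm out})}\bigr)+C_{\eta_1}\|u\|^2_{L^2_\#(\Omega)}$$
for any prescribed $\eta_1>0$. This step is routine and relies only on the standing assumption that $V\in L^2_\#(\Omega)$.

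The consistency term is handled by Cauchy--Schwarz and Young's inequality with parameter $\eta_2>0$, giving $|\int_{\Gamma}\{\nabla u\}\cdot[u]\,ds|\leq \eta_2\|\{\nabla u\}\|^2_{L^2(\Gamma)}+\tfrac{1}{4\eta_2}\|[u]\|^2_{L^2(\Gamma)}$. An inverse trace estimate on the discrete pieces of $\nabla u$---derived by the same interpolation argument that produces \eqref{inverse-estimate} from \eqref{finite-dimensional-norm} and \eqref{eq-inverse-H1-assumption}, now applied to the derivatives---supplies a bound of the form $\|\{\nabla u\}\|^2_{L^2(\Gamma)}\lesssim \varrho^{2+2\epsilon}\bigl(\|\nabla u\|^2_{L^2(\Omega_{\rm in})}+\|\nabla u\|^2_{L^2(\Omega_{\rm out})}\bigr)$. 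Choosing $\eta_2\sim \varrho^{-2-2\epsilon}$ small enough that the gradient contribution is at most $\tfrac14$ of the volume gradient seminorms, and $C_\sigma$ large enough that $\sigma-\tfrac{1}{4\eta_2}\geq \tfrac{\sigma}{2}$, gives
$$a^{\rm DG}(u,u)\geq \tfrac18\bigl(\|\nabla u\|^2_{L^2(\Omega_{\rm in})}+\|\nabla u\|^2_{L^2(\Omega_{\rm out})}\bigr)+\tfrac12\sigma\|[u]\|^2_{L^2(\Gamma)}-C_{1/8}\|u\|^2_{L^2_\#(\Omega)}$$
after taking $\eta_1=1/8$. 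Adding and subtracting $\tfrac18\|u\|^2_{L^2_\#(\Omega)}$ rebuilds the full $\|u\|^2_{\rm DG}$, from which we read off $\alpha=\tfrac18$ and $\beta=C_{1/8}+\tfrac18$. The main obstacle is the inverse trace estimate on $\{\nabla u\}$: just as with \eqref{eq-inverse-H1-assumption}, it is not rigorously justified for the plane-wave plus radial-polynomial basis and has to be posited or supported numerically; once accepted, the remainder of the argument is entirely standard.
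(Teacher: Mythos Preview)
Your treatment of the potential term is essentially the paper's: H\"older, Sobolev embedding in three dimensions, and Young's inequality to peel off an arbitrarily small multiple of the broken $H^1$ seminorm at the expense of a large $L^2$ term. The real difference is in how you handle the consistency integral $\int_{\Gamma}\{\nabla u\}\cdot[u]\,ds$.

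You pair in $L^2(\Gamma)\times L^2(\Gamma)$ and therefore need an inverse \emph{trace} inequality on the gradient, $\|\{\nabla u\}\|_{L^2(\Gamma)}^2\lesssim\varrho^{2+2\epsilon}\|\nabla u\|_{L^2}^2$, which---as you correctly flag---is an additional assumption not stated anywhere in the paper. The paper instead pairs in $H^{-1/2}(\Gamma)\times H^{1/2}(\Gamma)$: the normal trace $\{\nabla u\}\cdot\mathbf n$ is controlled in $H^{-1/2}(\Gamma)$ by the broken $H^1$ volume norm via a trace-type estimate with a $\varrho$-independent constant, and the jump $[u]$ is controlled in $H^{1/2}(\Gamma)$ by $\gamma_\varrho\|[u]\|_{L^2(\Gamma)}$ using precisely the inverse estimate \eqref{finite-dimensional-norm}/\eqref{inverse-estimate} already posited for the jump. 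After Young's inequality this gives the same scaling $\sigma\sim\varrho^{2+2\epsilon}$ and the same conclusion.

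So both routes arrive at \eqref{coercive}, but the paper's is more economical with hypotheses: it puts the entire inverse-estimate burden on $[u]$, which is exactly what \eqref{finite-dimensional-norm} provides, and needs nothing extra for $\{\nabla u\}$. Your version is the textbook SIP argument transplanted verbatim, at the cost of one more unproven inverse inequality; the paper's duality trick is what lets the lemma rest solely on the assumptions already declared in Section~\ref{subsec-bvP}.
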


\begin{proof}
Using H\"{o}lder inequality, Sobolev's embedding theorem and Young's inequality, we obtain that
\begin{multline*} 
	\qquad
	\left|\int Vu^2\right| 
	\leq \|V\|_{L^2}\cdot\|u^{\frac{1}{2}}\|_{L^4}\cdot \|u^{\frac{3}{2}}\|_{L^4}
	= C\|u\|^{\frac{1}{2}}_{L^2}\cdot\|u\|^{\frac{3}{2}}_{L^6}
	\\
	\leq C\|u\|^{\frac{1}{2}}_{L^2}\cdot\|u\|^{\frac{3}{2}}_{H^1}
	\leq C(\frac{\delta^{-4}}{4}\|u\|^2_{L^2}+\frac{3\delta^{\frac{3}{4}}}{4}\|u\|^{2}_{H^1}),
	\qquad
\end{multline*}
where $\delta>0$ is arbitrarily small.
Hence we have 
\begin{eqnarray}\label{coercive-veff}
\int Vu^2\geq -C\delta^{\frac{3}{4}}\|u\|^{2}_{H^1}-b\delta^{-4}\|u\|^2_{L^2}
\end{eqnarray}
with constants $C,~b>0$.
Moreover, we have
\begin{eqnarray}\label{coercive-jump}\nonumber
\left|\int_{\Gamma}\{\nabla u\}\cdot[u]ds\right|&\leq&\|\{\nabla u\}\cdot{\bf n}^{+}\|_{H^{-\frac{1}{2}}(\Gamma)}\cdot\|u^{+}-u^{-}\|_{H^{\frac{1}{2}}(\Gamma)}\\[1ex]\nonumber
&\leq&\delta^2 \|\{\nabla u\}\|^2_{H^{-\frac{1}{2}}(\Gamma)}+\delta^{-2}\|u^{+}-u^{-}\|^2_{H^{\frac{1}{2}}(\Gamma)}\\[1ex]
&\leq&C\delta^2(\|u\|^2_{H^1(\Omega_{\rm out})} + \|u\|^2_{H^1(\Omega_{\rm in})})+\delta^{-2}\|u^{+}-u^{-}\|^2_{H^{\frac{1}{2}}(\Gamma)}.
\end{eqnarray}
Using \eqref{finite-dimensional-norm}, \eqref{coercive-veff} and \eqref{coercive-jump}, we can derive \eqref{coercive} and complete the proof. 
\end{proof}

For simplicity, we can take $\beta=0$. Note that $a^{\rm DG}_{\beta}(u,v) =a^{\rm DG}(u,v)+\beta(u,v)$ makes this true for $\beta>0$.

Define the solution operators 
\begin{eqnarray*}
T:L_{\#}^2(\Omega)\rightarrow H_{\#}^1(\Omega) \quad a(Tf,v)=(f,v)\quad\forall~v\in H_{\#}^1(\Omega),
\end{eqnarray*}
and
\begin{eqnarray*}
T^{\rm DG}:L_{\#}^2(\Omega)\rightarrow \mathcal{S}^K_{NL}(\Omega)
\quad a^{\rm DG}(T^{\rm DG}f,v)=(f,v)\quad\forall~v\in \mathcal{S}^K_{NL}(\Omega).
\end{eqnarray*}

\begin{proposition}\label{proposition-T-approximate}
Assume that \eqref{eq-inverse-H1-assumption} is true and  $C_{\sigma}$ is sufficiently large. 
If $Tf\in H^s(\Omega_{\rm out})\oplus H^s([0,R]\times S^2)$
for $f\in L^2_{\#}(\Omega)$ and $s\geq3$,
then there exists a constant $C$ such that
\begin{eqnarray}\label{rate-T-tildeT}
\|(T-T^{\rm DG})f\|_{\rm DG} \leq C\varrho^{\frac{3}{2}+\epsilon-s}
\|Tf\|_{\hs}.
\end{eqnarray}
\end{proposition}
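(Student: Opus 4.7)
The plan is to perform a standard Strang--C\'ea-type DG analysis centered on a judiciously chosen interpolant $Iu\in\mathcal{S}^K_{NL}(\Omega)$ of $u:=Tf$, reducing the error to best-approximation quantities that can be controlled by Propositions \ref{propositon-planewave} and \ref{proposition-radial} combined with trace inequalities on $\Gamma$.

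First I would establish consistency and Galerkin orthogonality. Since $u\in H^1_{\#}(\Omega)$, its jump $[u]$ vanishes on $\Gamma$, and since by Lemma \ref{lemma-regularity} the restrictions $u|_{\Omega_{\rm in}}$, $u|_{\Omega_{\rm out}}$ are smooth the interior fluxes $\nabla u$ have two-sided traces on $\Gamma$ that match. Integration by parts on each of $\Omega_{\rm in}$ and $\Omega_{\rm out}$ therefore gives $a^{\rm DG}(u,v)=(f,v)$ for every $v\in\mathcal{S}^K_{NL}(\Omega)$, so $a^{\rm DG}(u-T^{\rm DG}f,v)=0$ on the discrete space. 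Next, set $(Iu)|_{\Omega_{\rm out}}:=P_K u$ and $(Iu)|_{\Omega_{\rm in}}:=P_{NL}u$, split $u-T^{\rm DG}f=(u-Iu)+(Iu-T^{\rm DG}f)$, and invoke the coercivity \eqref{coercive} (with $\beta=0$) together with Galerkin orthogonality to obtain
\begin{align*}
\alpha\,\|Iu-T^{\rm DG}f\|^2_{\rm DG}\leq a^{\rm DG}(Iu-u,\,Iu-T^{\rm DG}f).
\end{align*}
A Cauchy--Schwarz-style continuity bound for $a^{\rm DG}$ then reduces the task to controlling $u-Iu$ in the augmented DG norm
\begin{align*}
\|w\|_{\rm DG,*}^{2}:=\|w\|_{\rm DG}^{2}+\sigma^{-1}\|\{\nabla w\}\|_{L^{2}(\Gamma)}^{2},
\end{align*}
whose bulk $H^1$-part is already bounded by $\varrho^{1-s}\|u\|_{\hs}$ via Propositions \ref{propositon-planewave} and \ref{proposition-radial}. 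The two $\Gamma$-contributions I would treat by combining the trace inequality $\|g\|_{L^{2}(\Gamma)}\lesssim\|g\|_{H^{1/2+\delta}}$ (applied inside and outside) with the same approximation estimates at non-integer regularity indices $1/2+\delta$ and $3/2+\delta$; inserting $\sigma=C_\sigma\varrho^{2+2\epsilon}$ from \eqref{inverse-estimate} yields
\begin{align*}
\sigma^{1/2}\|[u-Iu]\|_{L^{2}(\Gamma)}\lesssim \varrho^{3/2+\epsilon-s}\|u\|_{\hs},\quad
\sigma^{-1/2}\|\{\nabla(u-Iu)\}\|_{L^{2}(\Gamma)}\lesssim \varrho^{1/2-\epsilon-s}\|u\|_{\hs}.
\end{align*}
Combining these with the bulk bound and a final triangle inequality in $\|\cdot\|_{\rm DG}$ produces \eqref{rate-T-tildeT}.

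The main obstacle is the interface balance. The penalty $\sigma\sim\varrho^{2+2\epsilon}$ is forced by the discrete inverse estimate \eqref{inverse-estimate}, and its growth must be compensated by a genuine gain in the trace approximation of $u-Iu$ on $\Gamma$. The sharp fractional trace estimate loses a factor of order $\varrho^{1/2+\delta}$ relative to the bulk $H^1$ rate, and it is precisely this loss, combined with $\sigma^{1/2}\sim\varrho^{1+\epsilon}$, that fixes the exponent $3/2+\epsilon$; the $\epsilon$ in the stated rate enters only through \eqref{inverse-estimate}. A secondary technical point is verifying the fractional-order approximation estimates of Propositions \ref{propositon-planewave} and \ref{proposition-radial} at $t=1/2+\delta$ and $3/2+\delta$, which for the polynomial radial basis and plane waves follow from standard interpolation between the stated integer-order bounds.
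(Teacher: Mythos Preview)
Your overall strategy coincides with the paper's: the same interpolant $Iu=P_Ku|_{\Omega_{\rm out}}+P_{NL}u|_{\Omega_{\rm in}}$, the same consistency/Galerkin-orthogonality observation, the same split $u-T^{\rm DG}f=\eta+\xi$ with $\eta=u-Iu$, and the same use of coercivity \eqref{coercive} to reduce matters to bounding $a^{\rm DG}(\eta,\xi)$ by approximation quantities. The final rate $\varrho^{3/2+\epsilon-s}$ is driven in both arguments by $\sigma^{1/2}\|[\eta]\|_{L^2(\Gamma)}$, estimated via a fractional trace and the $H^{1/2}$ approximation rate $\varrho^{1/2-s}$.

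The substantive divergence is in how the two flux integrals on $\Gamma$ are controlled. The paper does \emph{not} introduce the augmented norm $\|\cdot\|_{\rm DG,*}$; instead it pairs these terms in $H^{-1/2}(\Gamma)\times H^{1/2}(\Gamma)$ duality, using that the normal trace satisfies $\|\{\nabla v\}\cdot{\bf n}\|_{H^{-1/2}(\Gamma)}\lesssim\|v\|_{H^1(\Omega_{\rm in})}+\|v\|_{H^1(\Omega_{\rm out})}$ for both $v=\eta$ and $v=\xi$, and $\|[v]\|_{H^{1/2}(\Gamma)}\lesssim\|v\|_{H^1}$ by the ordinary trace theorem. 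Thus both flux terms are bounded by $\|\eta\|_{\rm DG}\|\xi\|_{\rm DG}$, requiring only the $H^1$ approximation rate $\varrho^{1-s}$ and no higher. Your $L^2(\Gamma)$ splitting, by contrast, hides two extra ingredients: (i) to get $\int_\Gamma\{\nabla\xi\}\cdot[\eta]\lesssim\|\xi\|_{\rm DG}\|\eta\|_{\rm DG,*}$ you need a \emph{discrete} inverse trace inequality $\|\{\nabla\xi\}\|_{L^2(\Gamma)}\lesssim\sigma^{1/2}\|\xi\|_{H^1}$ for $\xi\in\mathcal S^K_{NL}(\Omega)$, which is standard for mesh-based polynomials but is \emph{not} established in the paper for this plane-wave/spherical basis (the only inverse estimate assumed is \eqref{eq-inverse-H1-assumption}, which concerns tangential regularity of the jump, not the normal flux); and (ii) you need approximation of $u-Iu$ in $H^{3/2+\delta}$, i.e.\ beyond the range $0\le t\le 1$ covered by Proposition \ref{proposition-radial}. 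Point (ii) can indeed be recovered by working in the annulus $[R/2,R]\times S^2$ where the spectral estimate \eqref{rate-P} applies at all orders, but point (i) is a genuine gap in your outline: without it the continuity bound $|a^{\rm DG}(\eta,\xi)|\lesssim\|\eta\|_{\rm DG,*}\|\xi\|_{\rm DG}$ fails for the term $\int_\Gamma\{\nabla\xi\}\cdot[\eta]$. The paper's $H^{-1/2}$--$H^{1/2}$ pairing sidesteps both issues entirely.
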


\begin{proof}
Denote $w=Tf$ and $w^{\rm DG}=T^{\rm DG}f$.
Define the projection $\mathcal{P}u=P_{K}u|_{\Omega_{\rm out}}+P_{NL}u|_{\Omega_{\rm in}}$,
We decompose the error $e=w-w^{\rm DG}$ as $e=\eta+\xi$, where
$\eta=w-\mathcal{P}w$ and $\xi=\mathcal{P}w-w^{\rm DG}$.
With simple calculations, we can easily obtain that $a^{\rm DG}(w,\xi)=(f,\xi)$, which leads to the property that $a^{\rm DG}(w-w^{\rm DG},\xi)=0$.
Using \eqref{coercive} and the property, we have
\begin{eqnarray*}
\|\xi\|^2_{\rm DG}\lesssim a^{\rm DG}(\xi,\xi)=a^{\rm DG}(e-\eta,\xi)=-a^{\rm DG}(\eta,\xi).
\end{eqnarray*}
Thus we deduce that
\begin{eqnarray}\label{proof-r0}
\|\xi\|^2_{\rm DG} \lesssim I_1+I_2+I_3,
\end{eqnarray}
where
\begin{align*}
&I_1 = \left|\int_{\Omega_{\rm in}}(\frac{1}{2}\nabla\eta\cdot\nabla\xi+V\eta\xi)\right|
+\left|\int_{\Omega_{\rm out}}(\frac{1}{2}\nabla\eta\cdot\nabla\xi+V\eta\xi)\right|, \\[1ex]
&I_2 = \frac{1}{2}\left|\int_{\Gamma}\{\nabla\eta\}\cdot[\xi]ds+\int_{\Gamma}\{\nabla\xi\}\cdot[\eta]ds\right|, \\[1ex]
&I_3 = \left|\int_{\Gamma}\sigma[\eta]\cdot[\xi]ds\right|.
\end{align*}
	
Since $V\in L_{\#}^2(\Omega)$, we have
\begin{eqnarray}\label{proof-r1}
I_1 \lesssim \|\xi\|_{\rm DG}(\|\eta\|_{H^1(\Omega_{\rm in})}+\|\eta\|_{H^1(\Omega_{\rm out})}).
\end{eqnarray}
Using the trace inequality, $I_2$ can be estimated by
\begin{align}\label{proof-r2} 
\nonumber
& \hskip -0.2cm
I_2 \lesssim \|[\xi]\|_{H^{\frac{1}{2}}(\Gamma)}\|\{\nabla\eta\}\|_{H^{-\frac{1}{2}}(\Gamma)}
+ \|\{\nabla\xi\}\|_{H^{-\frac{1}{2}}(\Gamma)}\|[\eta]\|_{H^{\frac{1}{2}}(\Gamma)}
\\[1ex] \nonumber
&\lesssim \|[\xi]\|_{H^{\frac{1}{2}}(\Gamma)}(\|\eta\|_{H^{1}([\frac{R}{2},R]\times S^2)}+\|\eta\|_{H^{1}(\Omega_{\rm out})})
+ (\|\xi\|_{H^{1}([\frac{R}{2},R]\times S^2)}+\|\xi\|_{H^{1}(\Omega_{\rm out})})\|[\eta]\|_{H^{\frac{1}{2}}(\Gamma)}
\\[1ex] \nonumber
&\lesssim (\|\xi\|_{H^{1}([\frac{R}{2},R]\times S^2)}+\|\xi\|_{H^{1}(\Omega_{\rm out})})
(\|\eta\|_{H^{1}([\frac{R}{2},R]\times S^2)}+\|\eta\|_{H^{1}(\Omega_{\rm out})})
\\[1ex]
&\lesssim \|\xi\|_{\rm DG}\|\eta\|_{\rm DG}.
\end{align}
Similarly, $I_3$ can be estimated by
\begin{eqnarray}\label{proof-r3}
I_3 \lesssim\sigma^{\frac{1}{2}}\|\xi\|_{\rm DG}\|[\eta]\|_{L^2(\Gamma)}.
\end{eqnarray}
	
Collecting \eqref{proof-r0} and the error bounds \eqref{proof-r1} to \eqref{proof-r3}, we have
\begin{eqnarray*}
\|\xi\|_{\rm DG} \lesssim \|\eta\|_{\rm DG}.
\end{eqnarray*}
We obtain from \eqref{rate-planewave} and \eqref{rate-sphere} that
if $u\in H^s(\Omega_{\rm out})\oplus H^s([0,R]\times S^2)~ (s\geq3)$, then
\begin{eqnarray*}
\|\eta\|_{H^{1}([\frac{R}{2},R]\times S^2)}+\|\eta\|_{H^{1}(\Omega_{\rm out})}
\lesssim \varrho^{1-s}\|w\|_{\hs},
\end{eqnarray*}
which together with
\begin{eqnarray*}
\|[\eta]\|_{L^2(\Gamma)}\lesssim \|\eta\|_{H^{\frac{1}{2}}([\frac{R}{2},R]\times S^2)}+\|\eta\|_{H^{\frac{1}{2}}(\Omega_{\rm out})}
\lesssim \varrho^{\frac{1}{2}-s}\|w\|_{\hs}
\end{eqnarray*}
leads to
\begin{eqnarray*}
\|w-w^{\rm DG}\|_{\rm DG} \leq \|\eta\|_{\rm DG}+\|\xi\|_{\rm DG} \lesssim
(\varrho^{1-s}+\varrho^{\frac{1}{2}-s}\gamma_\varrho)\|w\|_{\hs}.
\end{eqnarray*}
Then we can derive \eqref{rate-T-tildeT} by using \eqref{inverse-estimate}.
\end{proof}

\subsection{DG approximations of the eigenvalue problem}
\label{subsec-eigen}

We construct DG methods for eigenvalue problem \eqref{eq-model}: Find $\lambda^{\rm DG}
\in\mathbb{R}$ and $u^{\rm DG}\in \mathcal{S}^K_{NL}(\Omega)$, such that 
$\|u^{\rm DG}\|_{L^2_{\#}(\Omega)} =1$ and
\begin{eqnarray}\label{eq-eigen-DG}
a^{\rm DG}(u^{\rm DG},v)=\lambda^{\rm DG}(u^{\rm DG},v)
\qquad\forall~v\in\mathcal{S}^K_{NL}(\Omega).
\end{eqnarray}
Note that \eqref{eq-model} and \eqref{eq-eigen-DG} are equivalent to $\lambda Tu=u$
and $\lambda^{\rm DG}T^{\rm DG}u^{\rm DG}=u^{\rm DG}$, respectively.

Denote by $\sigma(T)$ the spectrum and $\rho(T)$ the resolvent set of the solution operator $T$.
For any $z\in\mathbb{C}$ in $\rho(T)$, we define the resolvent operator $R_z(T)=(z-T)^{-1}$.
Let $\lambda^{-1}$ be an eigenvalue of $T$ and $\gamma$ be a circle in the complex plane
that is centered at $\lambda^{-1}$ and does not enclose any other point of $\sigma(T)$. 

Define the following operators with contour integrations:
\begin{eqnarray*}
\mathscr{E}=\mathscr{E}(\lambda)=\frac{1}{2\pi i}\int_{\gamma}R_z(T)dz 
\quad{\rm and}\quad
\mathscr{E}^{\rm DG}=\mathscr{E}^{\rm DG}(\lambda)=\frac{1}{2\pi i}\int_{\gamma}R_z(T^{\rm DG})dz.
\end{eqnarray*}
If $\varrho$ is sufficiently large, then $\mathscr{E}$ and $\mathscr{E}^{\rm DG}$ are the spectral projectors
of $T$ and $T^{\rm DG}$ relative to $\lambda^{-1}$, respectively (see \cite{osborn75}).

Define the distances
\begin{eqnarray*}
\mathcal{D}(X,Y)=\sup_{x\in X\atop\|x\|_{\rm DG}=1}\inf_{y\in Y}\|x-y\|_{\rm DG}
\quad{\rm and}\quad
\mathscr{D}(X,Y)=\max\{\mathcal{D}(X,Y),\mathcal{D}(Y,X)\}.
\end{eqnarray*}
Using proposition \ref{proposition-T-approximate} and similar arguments as those in \cite{antonietti06}, we have the following
convergence results (including non-pollution and completeness) for DG eigenvalues and eigenspaces.
\begin{remark}\label{theo-convergence}
Let $A\subset\mathbb{R}$ be an open set containing $\sigma(T)$. If
$C_{\sigma}$ and $\varrho$ are sufficiently large, 
then $\sigma(T^{\rm DG})\subset A$.
Moreover, for all $z\in\sigma(T)$, we have
\begin{eqnarray*}
\lim_{\varrho\rightarrow\infty} \inf_{y\in\sigma(T^{\rm DG})}|z-y|=0.
\end{eqnarray*}
In addition, we have
\begin{eqnarray*}
\lim_{\varrho\rightarrow\infty}\mathscr{D}\big(\mathcal{R}(\mathscr{E}^{\rm DG}),\mathcal{R}(\mathscr{E})\big)=0,
\end{eqnarray*}
where $\mathcal{R}$ denotes the range.
\end{remark}

Now we can derive the following {\it a priori} error estimate for DG approximations.

\begin{theorem}\label{theo-approximation-rate}
Assume that \eqref{eq-inverse-H1-assumption} is true and  $C_{\sigma}$ is sufficiently large. 
Let $\lambda$ be an eigenvalue of \eqref{eq-model} with algebraic multiplicity m.
Then for $\varrho$ sufficiently large, there exist $m$ eigenpairs $(\lambda_{i}^{\rm DG},u_{i}^{\rm DG})~(i=1,2,\cdots m) $ of \eqref{eq-eigen-DG} such that
\begin{eqnarray}\label{eq-convergence-rate}
|\lambda_i^{\rm DG}-\lambda|+\|u_i^{\rm DG}-u_i\|_{\rm DG} \leq C_s\varrho^{\frac{3}{2}+\epsilon-s}
\qquad \forall~s\geq 3,~~i=1,2,\cdots m,
\end{eqnarray}
where the constant $C_s$ depends only on $\lambda, u_i$ and $s$.
\end{theorem}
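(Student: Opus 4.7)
The plan is to cast the proof in the abstract spectral approximation framework of Babuska--Osborn, as adapted to the DG setting in \cite{antonietti06,osborn75}. Since \eqref{eq-model} and \eqref{eq-eigen-DG} are equivalent to $\lambda T u=u$ and $\lambda^{\rm DG}T^{\rm DG}u^{\rm DG}=u^{\rm DG}$, what remains is to upgrade the operator-level estimate in Proposition \ref{proposition-T-approximate} into the pointwise estimates \eqref{eq-convergence-rate} for individual eigenpairs.

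First I would invoke Remark \ref{theo-convergence} to fix, for $\varrho$ sufficiently large, a cluster of exactly $m$ DG eigenvalues (counted with multiplicity) near $\lambda$, whose associated spectral projector $\mathscr{E}^{\rm DG}$ has an $m$-dimensional range. The next step is the classical gap estimate
\begin{eqnarray*}
\mathscr{D}\bigl(\mathcal{R}(\mathscr{E}),\mathcal{R}(\mathscr{E}^{\rm DG})\bigr)\;\lesssim\;\sup_{u\in\mathcal{R}(\mathscr{E}),\;\|u\|_{\rm DG}=1}\|(T-T^{\rm DG})u\|_{\rm DG},
\end{eqnarray*}
which follows from the resolvent representations of $\mathscr{E}$ and $\mathscr{E}^{\rm DG}$ along the contour $\gamma$. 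Every $u\in\mathcal{R}(\mathscr{E})$ is a linear combination of exact eigenfunctions of \eqref{eq-model}, so Lemma \ref{lemma-regularity}, together with standard interior elliptic regularity in $\Omega_{\rm out}$ where $V$ is smooth, places $u\in\widetilde{H}^s(\Omega)$ for every $s>0$. Since $T u=\lambda^{-1}u$, Proposition \ref{proposition-T-approximate} yields
\begin{eqnarray*}
\|(T-T^{\rm DG})u\|_{\rm DG}\;\lesssim\;\varrho^{\frac{3}{2}+\epsilon-s}\|u\|_{\widetilde{H}^s(\Omega)},
\end{eqnarray*}
which (using that $\mathcal{R}(\mathscr{E})$ is finite-dimensional, so the DG-norm and the $\widetilde H^s$-norm are equivalent on it) produces, for each exact normalized eigenfunction $u_i$, a DG eigenfunction $u_i^{\rm DG}\in\mathcal{R}(\mathscr{E}^{\rm DG})$ with $\|u_i-u_i^{\rm DG}\|_{\rm DG}\le C_s\varrho^{\frac{3}{2}+\epsilon-s}$.

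For the eigenvalue error I would exploit the Galerkin/Rayleigh identity
\begin{eqnarray*}
\lambda_i^{\rm DG}-\lambda\;=\;a^{\rm DG}(u_i-u_i^{\rm DG},u_i-u_i^{\rm DG})\;-\;\lambda\,\|u_i-u_i^{\rm DG}\|_{L^2_{\#}(\Omega)}^2,
\end{eqnarray*}
valid after renormalization so that $\|u_i^{\rm DG}\|_{L^2_{\#}(\Omega)}=1$ and $(u_i,u_i^{\rm DG})>0$. Combined with the DG-norm continuity $|a^{\rm DG}(v,w)|\lesssim\|v\|_{\rm DG}\|w\|_{\rm DG}$, this gives $|\lambda_i^{\rm DG}-\lambda|\lesssim\|u_i-u_i^{\rm DG}\|_{\rm DG}^2$, which trivially implies the linear bound in \eqref{eq-convergence-rate}; the arbitrariness of $s$ then produces the stated superalgebraic rate.

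The main obstacle, in my view, is establishing the DG-norm continuity of $a^{\rm DG}$ on the mixed space $\widetilde{H}^s(\Omega)+\mathcal{S}^K_{NL}(\Omega)$: the interface terms $\int_{\Gamma}\{\nabla\cdot\}\cdot[\cdot]\,ds$ couple a trace of the gradient against a jump whose $L^2(\Gamma)$-norm is controlled only through the penalty parameter $\sigma=C_\sigma\varrho^{2+2\epsilon}$. Consistency helps on the exact-solution side, where all jumps of $u_i$ across $\Gamma$ vanish, but the cross terms between $u_i$ and $u_i^{\rm DG}$ still require the trace and inverse estimates \eqref{finite-dimensional-norm}--\eqref{inverse-estimate} already invoked in the proof of Proposition \ref{proposition-T-approximate}. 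Once these continuity bounds are verified with constants that are absorbed into $C_s$, the conclusion follows as above.
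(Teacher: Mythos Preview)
Your proposal is correct and follows the same route as the paper: the Osborn/Babu\v{s}ka--Osborn framework, the gap estimate $\mathscr{D}(\mathcal{R}(\mathscr{E}),\mathcal{R}(\mathscr{E}^{\rm DG}))\lesssim\|(T-T^{\rm DG})|_{\mathcal{R}(\mathscr{E})}\|$, and then Proposition~\ref{proposition-T-approximate} together with Lemma~\ref{lemma-regularity} and $Tu_i=\lambda^{-1}u_i$ for the eigenfunction bound. The continuity obstacle you flag on the mixed space is exactly the one underlying the paper's bound \eqref{add-D-error} on $D_\delta$; both arguments lean on the same trace/inverse estimates \eqref{finite-dimensional-norm}--\eqref{inverse-estimate} at that point.

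The one substantive difference is your eigenvalue identity. The paper writes
\[
\lambda-\lambda_i^{\rm DG}=a^{\rm DG}(e,e)-\lambda_i^{\rm DG}(e,e)+D_\delta+\overline{D_\delta},
\qquad D_\delta=a^{\rm DG}(u_i^{\rm DG},u_i)-\lambda_i^{\rm DG}(u_i^{\rm DG},u_i),
\]
and bounds $D_\delta$ only linearly in $\|e\|_{\rm DG}$, yielding the stated rate $\varrho^{\frac{3}{2}+\epsilon-s}$. Your Rayleigh identity $\lambda_i^{\rm DG}-\lambda=a^{\rm DG}(e,e)-\lambda\|e\|_{L^2}^2$ exploits SIP consistency $a^{\rm DG}(u_i,v)=\lambda(u_i,v)$ directly; if the mixed-space continuity indeed gives $|a^{\rm DG}(e,e)|\lesssim\|e\|_{\rm DG}^2$, you obtain the \emph{quadratic} eigenvalue rate that the paper only conjectures in Remark~3.3. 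Even if continuity delivers only $|a^{\rm DG}(e,e)|\lesssim C_{u_i}\|e\|_{\rm DG}$ (which is all the paper actually establishes), your argument still recovers the linear bound \eqref{eq-convergence-rate}. So your variant is at worst equivalent and at best sharper.
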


\begin{proof}
Note that for $f\in L_{\#}^2(\Omega)$, $Tf\in \widetilde{H}^2(\Omega)$ and $\|Tf\|_{\widetilde{H}^2(\Omega)}\leq C\|f\|_{L_{\#}^2(\Omega)}$
(see \cite{egorov97}, p.257, Thm. 9 and (8.137)). 
	
For $f\in\hdel$, we have from Proposition \ref{proposition-T-approximate} that
\begin{eqnarray*}
\|(T-T^{\rm DG})f\|_{\rm DG}  
\lesssim \varrho^{-\frac{1}{2}+\epsilon} \|Tf\|_{\widetilde{H}^2(\Omega)}  
\lesssim \varrho^{-\frac{1}{2}+\epsilon}\|f\|_{L_{\#}^2(\Omega)}  
\lesssim 
\varrho^{-\frac{1}{2}+\epsilon}\|f\|_{\rm DG},
\end{eqnarray*}
which implies
\begin{eqnarray}\label{proof-8-1}
\lim_{\varrho\rightarrow \infty}\|T-T^{\rm DG}\|_{\mathscr{L}(\hdel,\hdel)}
\leq C\lim_{\varrho\rightarrow \infty}\varrho^{-\frac{1}{2}+\epsilon} = 0.
\end{eqnarray}
Using \eqref{proof-8-1} and \cite[Theorem 1]{osborn75}, we have the convergence of the eigenvalues and
\begin{eqnarray}\label{distance_T}
\mathscr{D}(\mathcal{R}(\mathcal{E}),\mathcal{R}(\mathcal{E}^{\rm DG}))
\lesssim \|T-T^{\rm DG}\|_{\mathscr{L}(\mathcal{R}(\mathcal{E}),{\hdel})}.
\end{eqnarray}
Then it is only necessary for us to estimate the right-hand side of \eqref{distance_T}.

Using proposition \ref{proposition-T-approximate}, the regularity result Lemma \ref{lemma-regularity}
and the fact $Tv=\lambda^{-1}v$ for $v\in \mathcal{R}(\mathcal{E})$, we have that for any $s\geq 3$,
\begin{multline}\label{add-proof-21}
\|T-T^{\rm DG}\|_{\mathscr{L}(\mathcal{R}(\mathscr{E}),\hdel)}
= \sup_{v\in \mathcal{R}(\mathscr{E}),\|v\|_{\rm DG}=1}\|(T-T^{\rm DG})v\|_{\rm DG} \\[1ex]
\leq C\varrho^{\frac{3}{2}+\epsilon-s}\sup_{v\in \mathcal{R}(\mathscr{E}),\|v\|_{\rm DG}=1}
\|Tv\|_{\hs}
\leq C\varrho^{\frac{3}{2}+\epsilon-s}\sup_{v\in \mathcal{R}(\mathscr{E}),\|v\|_{\rm DG}=1}
\|v\|_{\hs} 
\leq C_s\varrho^{\frac{3}{2}+\epsilon-s} ,
\end{multline}
where $C_s$ is a constant depending only on $\mathcal{R}(\mathscr{E}), \lambda$  and $s$ .

It is apparent from \eqref{distance_T} and \eqref{add-proof-21} that
\begin{eqnarray}\label{distance_0}
\lim\limits_{\varrho\rightarrow \infty}\mathscr{D}(\mathcal{R}(\mathcal{E}),\mathcal{R}(\mathcal{E}^{\rm DG}))
=0.
\end{eqnarray}
Let $m$ and $m_{\varrho}$ be the dimensions of $\mathcal{R}(\mathcal{E})$ and $\mathcal{R}(\mathcal{E}^{\rm DG})$, respectively. 
Then, \eqref{distance_0} indicates that, for $\varrho$ large enough, $m=m_{\varrho}$ (see \cite[p.200]{Kato66}) and there exist $m$ eigenfunctions $u_{i}^{\rm DG}\in \mathcal R(\mathcal{E}^{\rm DG})$ 
and $m$ eigenpairs $(\lambda_{i}^{\rm DG},u_{i}^{\rm DG})~(i=1,2,\cdots m) $ satisfying \eqref{eq-eigen-DG}. 
Moreover, according to the definition of distance $\mathscr{D}(X,Y)$, 
we can find $u_i\in \mathcal R(\mathcal{E})$  and $\|u_i\|_{L^2_{\#}(\Omega)}=1 $ such that 
\begin{eqnarray}
\|u_i^{\rm DG}-u_i\|_{\rm DG}\lesssim\mathscr{D}(\mathcal{R}(\mathcal{E}),\mathcal{R}(\mathcal{E}^{\rm DG}))\qquad i=1,2,\cdots m.
\end{eqnarray} 
This completes the proof of error estimates for eigenfunctions.

For eigenvalues, we obtain by a simple calculation that
\begin{align}\label{eigen-identity} 
\nonumber
&
\lambda-\lambda_i^{\rm DG} = a(u_i,u_i)-a^{\rm DG}(u_i^{\rm DG},u_i^{\rm DG}) 
\\[1ex]\nonumber
&= a^{\rm DG}(u_i-u_i^{\rm DG},u_i-u_i^{\rm DG})+2a^{\rm DG}(u_i^{\rm DG},u_i-u_i^{\rm DG})+a^{\rm DG}(u_i,u_i^{\rm DG})-a^{\rm DG}(u_i^{\rm DG},u_i) 
\\[1ex]\nonumber
&= a^{\rm DG}(u_i-u_i^{\rm DG},u_i-u_i^{\rm DG})+2\lambda_i^{\rm DG}(u_i^{\rm DG},u_i-u_i^{\rm DG})+2D_{\delta}+a^{\rm DG}(u_i,u_i^{\rm DG})-a^{\rm DG}(u_i^{\rm DG},u_i) 
\\[1ex]
&= a^{\rm DG}(u_i-u_i^{\rm DG},u_i-u_i^{\rm DG})-\lambda_i^{\rm DG}(u_i-u_i^{\rm DG},u_i-u_i^{\rm DG})+D_{\delta}+\overline{D_{\delta}}
\end{align}
with the consistency error
\begin{eqnarray}\label{add-D-error} \nonumber
D_{\delta} &=& a^{\rm DG}(u_i^{\rm DG},u_i)-\lambda_i^{\rm DG}(u_i^{\rm DG},u_i) =
a^{\rm DG}(u_i^{\rm DG},u_i-u_{i}^{\rm DG})-\lambda_i^{\rm DG}(u_i^{\rm DG},u_i-u_{i}^{\rm DG}) \\[1ex]
&\leq& C\varrho^{\frac{3}{2}+\epsilon-s}(\|u_i^{\rm DG}\|_{\rm DG}+\lambda_i^{\rm DG})\|u_i\|_{\hs}.
\end{eqnarray}
Using \eqref{add-proof-21} to \eqref{add-D-error},
we obtain
\begin{eqnarray*}
|\lambda-\lambda_i^{\rm DG}|\leq C_{s} \varrho^{\frac{3}{2}+\epsilon-s} \quad\quad\forall~s\geq 3,
\end{eqnarray*}
where the constant $C_s$ depends only on $\lambda, u_i$ and $s$.
\end{proof}

\begin{remark}
	We emphasize that our result works not only for the case of single eigenvalue ($m=1$),	but also for general cases of multiple eigenvalue ($m>1$).
\end{remark}

\begin{remark}
It is shown in many cases that the convergence rate of finite dimensional approximations under a weaker norm is faster than that under a stronger norm (see, e.g., \cite{babuska91,chatelin83}).
By making this assumption for our DG approximations, for example, 
\begin{eqnarray*}
	\|u_i-u_{i}^{\rm DG}\|_{L^2(\Omega)} \lesssim \varrho^{-\alpha} \|u_i-u_{i}^{\rm DG}\|_{\rm DG} \qquad\text{with some } \alpha>0,
\end{eqnarray*}
it may be true from  \eqref{eigen-identity} and \eqref{add-D-error} that the eigenvalue approximations have better convergence rate than that of eigenfunctions.

\end{remark}

\begin{remark}	
	Within the framework of Kohn-Sham density functional theory, one has to solve the nonlinear eigenvalue problem \eqref{eq:eigen}  with a SCF iteration. 
	Using our DG discretizations, the linear eigenvalue problem \eqref{eq-eigen-DG} is solved at each iteration step
	and complex mixing schemes such as Roothaan,  level-shifting and DIIS algorithms (see, e.g., \cite{cances00,lebris03}) are used to achieve convergence.
	
	If the exchange-correlation potential $V_{\rm xc}$ is sufficiently smooth and the trial state (from previous DG approximations)
	$\tilde{\Phi}\in(\mathcal{S}^K_{NL}(\Omega))^{\Ne}$, then we have from similar arguments as those in \cite{flad08} that 
	the eigenfunctions $\{\phi_i\}_{i=1,\cdots,\Ne}$ of $H_{\tilde{\Phi}}$ belong to 
	$C_{\rm loc}^{\infty}(\mathbb{R}^3\setminus\mathcal{R}) \cup C^{\infty}([0,R]\times S^2)$.
	This regularity together with the analysis in Theorem \ref{theo-approximation-rate} gives spectral convergence rates for 
	DG approximations of the (linear) eigenvalue problem in each SCF iteration step.
	
	Note that we have not obtained {\it a priori} error estimates for approximations of nonlinear eigenvalue problems 
	but only for linearized equations in SCF iterations.
	We refer to \cite{cances12,chen13} for numerical analysis of nonlinear eigenvalue problems.
\end{remark}

\section{Numerical experiments}
\label{sec-numerical} 
\setcounter{equation}{0}
\setcounter{figure}{0}

In this section, we will present some details for implementing our DG scheme,
and some numerical experiments in electronic structure calculations.

\subsection {Hamiltonian matrix elements}
\label{subsec-integral}

With our DG scheme, we can discretize the continuous eigenvalue problem into a (finite dimensional) matrix generalized eigenvalue problem
\begin{eqnarray*}
H\hat{u}_i = \lambda_i M \hat{u}_i ,
\end{eqnarray*}
where $\hat{u}_i$ are eigenvectors that correspond to the DG approximations $u_i^{\rm DG}$.
We shall explain in the following how the matrix elements of $H$ and $M$ are generated.

For basis functions ${\bf p}$ and ${\bf q}$, We divide the integrals for overlap matrix $M_{\bf pq}$
and stiff matrix $H_{\bf pq}$ into three parts.
The scattering identity (see, e.g., \cite{messiah64})
\begin{eqnarray}\label{eq-scattering}
e^{i\bf{k}\cdot\bf{r}}=4\pi\sum_{lm}i^l j_l(kr)\tilde{Y}_{lm}^*({\bf k})\tilde{Y}_{lm}({\bf r})
\end{eqnarray}
with $k=|{\bf k}|$,
is heavily used to bridge the gap between plane waves and spherical harmonics.

For ${\bf p,q}\in\mathcal{P}_K(\Omega_{\rm out})$, we have
\begin{eqnarray}\label{eq-matrix-Ma}
M^a_{\bf pq} = (e_{\bf {k_q}}|_{\Omega_{\rm out}},e_{\bf {k_p}}^*|_{\Omega_{\rm out}})
=\frac{1}{|\Omega|}\int_{\Omega_{\rm out}} e^{i({\bf k_q}-{\bf k_p})\cdot {\bf r}}
=U({\bf k_q}-{\bf k_p}), 
\end{eqnarray}
where $U({\bf k})$ is the Fourier transform of the step function with 0 inside the sphere and 1 outside
\begin{eqnarray*}\label{eq-U}
	U({\bf k})=\frac{1}{|\Omega|}\int_{\Omega_{\rm out}} e^{i{\bf k}\cdot {\bf r}}=\left\{ 
	\begin{array}{ll} |\Omega_{\rm out}|/|\Omega| &\mbox{if } k=0, \vspace{0.3cm}\\[1ex]
		-4\pi R^2 j_1(kR)/(k|\Omega|) &\mbox{if } k\neq 0 
	\end{array} \right.
\end{eqnarray*}
with $k=|{\bf k}|$ and $j_l$ the $l$th spherical Bessel function.
Similarly, we have from \eqref{eq-bilinear-DG} that
\begin{eqnarray}\label{eq-matrix-Ha}
H^a_{\bf pq} 
&=& a^{\rm DG}(e_{\bf k_q}|_{\Omega_{\rm out}},e^*_{\bf k_p}|_{\Omega_{\rm out}})
= \frac{1}{2}{\bf k_p\cdot\bf k_q}U({\bf k_q}-{\bf k_p})+V({\bf k_q}-{\bf k_p}) + \mathfrak{D}^a_{\bf pq},
\end{eqnarray}
where
\begin{eqnarray}\label{eq-V}
V({\bf k})=\frac{1}{|\Omega|^{\frac{1}{2}}}\int_{\Omega} V_{\rm eff}({\bf r})e_{\bf k}
-\frac{4\pi}{|\Omega|}\sum_{lm} i^l\tilde{Y}_{lm}({\bf k})\int_0^R r^2 v_{lm}(r) j_{l}(kr)dr 
\end{eqnarray}
with $k=|{\bf k}|$ and the potential inside the sphere expanded by $V({\bf r})=\sum_{lm}v_{lm}(r)\tilde{Y}_{lm}({\bf r})$.
The discontinuity and penalization term $\mathfrak{D}^a_{\bf pq}$ in \eqref{eq-matrix-Ha} is given by
\begin{align}\label{dg-pw}
\mathfrak{D}^a_{\bf pq}=\left\{ 
\begin{array}{ll}
\displaystyle\frac{4\pi R^2\sigma}{|\Omega|} & k_{\bf p}=k_{\bf q}=0, \vspace{0.3cm}\\[1ex]
\displaystyle\frac{4\pi R^2}{|\Omega|}\left(\frac{1}{4}\frac{\partial j_0(k_{\bf q}r)}{\partial r}\bigg|_{r=R}+ \sigma j_0(k_{\bf q}R)\right) & k_{\bf p}=0,~k_{\bf q}\neq 0,\vspace{0.3cm} \\[1ex]
\displaystyle\frac{4\pi R^2}{|\Omega|}\left(\frac{1}{4}\frac{\partial j_0(k_{\bf p}r)}{\partial r}\bigg|_{r=R} + \sigma j_0(k_{\bf p}R)\right) & k_{\bf p}\neq 0,~k_{\bf q}=0, \vspace{0.3cm}\\[1ex]
\displaystyle\frac{(4\pi R)^2}{|\Omega|}\sum_{lm}(-1)^l\tilde{Y}^*_{lm}(-{\bf k}_{\bf p})\tilde{Y}_{lm}({\bf k}_{\bf q}) \left(\frac{1}{4}j_l(k_{\bf q}R)\frac{\partial j_l(k_{\bf p}r)}{\partial r}\bigg|_{r=R}\right.\vspace{0.3cm} & \\[1ex]
\qquad\left.\displaystyle +\frac{1}{4}j_l(k_{\bf p}R)\frac{\partial j_l(k_{\bf q}r)}{\partial r}\bigg|_{r=R} + \sigma j_l(k_{\bf p}R)j_l(k_{\bf q}R)\right)
& k_{\bf p}\neq 0,~k_{\bf q}\neq 0, \\[1ex]
\end{array}\right.
\end{align}
with $k_{\bf p}=|{\bf k}_{\bf p}|$ and $k_{\bf q}=|{\bf k}_{\bf q}|$.
Note that the first term of \eqref{eq-V} is obtained by fast Fourier transform (FFT) and the second term is calculated by numerical integrations.

For ${\bf p,q}\in\mathcal{B}_{NL}(\Omega_{\rm in})$, we have from the orthogonality of $Y_{lm}$ on the surface that
\begin{eqnarray}\label{eq-matrix-Mb}
M^b_{\bf pq}=\delta_{ll'}\delta{mm'} \int_0^R r^2\chi_n(r)\chi_{n'}(r)dr
\end{eqnarray}
and
\begin{eqnarray}\label{eq-matrix-Hb}\nonumber
H^b_{\bf pq} &=& a^{\rm DG}\big( \chi_{n'}(r)\tilde{Y}_{l'm'}({\bf r})|_{\Omega_{\rm in}},
\chi_{n}(r)\tilde{Y}^*_{lm}({\bf r})|_{\Omega_{\rm in}} \big) 
\\\nonumber 
&=& \delta_{ll'}\delta{mm'}
\int_0^R \frac{1}{2}\left(r^2\chi'_n(r)\chi'_{n'}(r) + l(l+1)\chi_n(r)\chi_{n'}(r)\right)dr
\\
&& + \sum_{\hat{l}\hat{m}} G(ll'\hat{l},mm'\hat{m}) 
 \int_0^R r^2\chi_n(r)\chi_{n'}(r)v_{\hat{l}\hat{m}}(r)dr
+ \mathfrak{D}^b_{\bf pq},
\end{eqnarray}
where the potential inside the sphere is expanded by
$V({\bf r})=\sum_{\hat{l}\hat{m}}v_{\hat{l}\hat{m}}(r)\tilde{Y}_{\hat{l}\hat{m}}({\bf r})$
and $G$ is the integral of three spherical harmonics that can be written in terms of Gaunt coefficients (see, e.g., \cite{martin05}).
The discontinuity and penalization term $\mathfrak{D}^b_{\bf pq}$ is 
\begin{eqnarray}\label{dg-r}
\mathfrak{D}^b_{\bf pq}=\delta_{ll'}\delta{mm'}R^2\left(-\frac{1}{4}\chi_n(R)\chi'_{n'}(R)-
\frac{1}{4}\chi'_n(R)\chi_{n'}(R)+\sigma\chi_n(R)\chi_{n'}(R)\right).
\end{eqnarray}

For ${\bf p}\in\mathcal{P}_K(\Omega_{\rm out}),~{\bf q}\in\mathcal{B}_{NL}(\Omega_{\rm in})$, we have
\begin{eqnarray}\label{eq-matrix-Mc}
M^c_{\bf pq}=0
\end{eqnarray}
and
\begin{align}\label{eq-matrix-Hc} \nonumber
& H^c_{\bf pq} ~=~ a^{\rm DG}(\chi_{n}(r)\tilde{Y}_{lm}({\bf r})|_{\Omega_{\rm in}},e^*_{\bf k_p}|_{\Omega_{\rm out}}) ~=~ \mathfrak{D}^c_{\bf pq}
\\[1ex] 
&=\left\{
\begin{array}{ll}
0  & k_{\bf p}=0,l\neq0, \vspace{0.3cm}\\[1ex]
\displaystyle \frac{\sqrt{4\pi}R^2}{|\Omega|^{\frac{1}{2}}} \left(\frac{1}{4}\chi'_{n}(R)
-\sigma\chi_{n}(R)\right)  & k_{\bf p}=0,l=0, \vspace{0.3cm} \\[1ex]
\displaystyle
\frac{4\pi R^2}{|\Omega|^{\frac{1}{2}}}~i^l~ \tilde{Y}_{lm}(-{\bf k}_{\bf p})\left(\frac{1}{4}j_l(k_{\bf p}R)\chi'_{n}(R)
-\frac{1}{4}\chi_{n}(R)\frac{\partial j_l(k_{\bf p}r)}{\partial r}\bigg|_{r=R} \right. &~  \vspace{0.3cm}
\\[1ex]
\qquad -\sigma j_l(k_{\bf p}R)\chi_{n}(R)\bigg) & k_{\bf p}\neq 0.
\end{array}\right. \quad\quad
\end{align}

Since we use a symmetric DG scheme, the elements for 
${\bf p}\in\mathcal{B}_{NL}(\Omega_{\rm in}),~{\bf q}\in\mathcal{P}_K(\Omega_{\rm out})$ 
can be obtained immediately.

Combining \eqref{eq-matrix-Ma} -- \eqref{eq-matrix-Hc}, we can obtain the matrices $H$ and $M$,
and further solve the matrix eigenvalue problems by linear eigensolvers.

\subsection{Numerical results}
\label{subsec-numerial-results}

All the numerical results are presented by atomic units (a.u.).
When we test the convergence with respect to one parameter (say, $K$, $N$ or $L$), the other two parameters are fixed and chosen to be sufficiently large.

\noindent{\bf Example 1.} (linear problem for a single-atom system)
Consider the linear eigenvalue problem: Find $\lambda\in\mathbb{R}$ and $u\in H^1_{\#}(\Omega)$ such that
\begin{eqnarray}\label{toy_model_1}
\left(-\frac{1}{2}\Delta +V \right)u = \lambda u,
\end{eqnarray}
with $\Omega=[-5,5]^3$ and 
$\displaystyle V({\bf r})=-\frac{4\pi}{|\Omega|}\sum_{{\bf k}\in\mathcal{R}^*,{\bf k}\neq 0} \frac{e^{i{\bf k}\cdot {\bf r}}}{|{\bf k}|^2}$ .
Note that the potential $V$ can be viewed as a periodized version of the potential $\displaystyle -\frac{1}{|\bf r|}$ for a hydrogen atom.
It is periodic and sufficiently smooth everywhere except at the origin.
Then due to Lemma \ref{lemma-regularity},
the error estimates in Theorem \ref{theo-approximation-rate} hold.

We first compare the numerical errors of the lowest eigenvalue approximations by plane waves 
and our DG methods (see Figure \ref{fig-pw-dg}),
from which we observe that the DG approximations converge much faster.
We compare the eigenfunctions along the $x$-axis obtained by plane waves and DG methods (see Figure \ref{fig-wavefunction}).
We observe that the DG approximations can capture the cusp at the nuclear position while that plane waves can not.
For a more precise comparison, when the required accuracy is $10^{-1}$ (for the first eigenvalue), the DG method needs around $50$ degrees of freedom (DOFs) while the plane waves method need about $60$ DOFs; when the required accuracy is $10^{-2}$, the DG method needs around 300 DOFs while the plane waves method needs more than 1100 DOFs.

We further show the convergence rates of the eigenvalue errors with respect to plane wave truncations $K$
(see Figure \ref{fig-pw-dg-2}),  and observe exponential decay for different sizes of atomic spheres. 
We find a slightly faster convergence rate of the numerical errors (in Figure \ref{fig-pw-dg-2}) with a bigger size of atomic sphere.
The reason is that the eigenfunctions are less varying outside a larger atomic sphere.
However, we see that the choice of $R$ does not affect the numerical simulations significantly. In practical simulations, we could choose relatively large atomic spheres as long as they do not overlap.

We also present the numerical errors with respect to the orders of radial basis functions (see Figure \ref{fig-r}),
and compare the polynomials with Slater-type atomic orbitals
\begin{eqnarray*}
 \chi_k(r)=r^k e^{-\eta r}, \quad k=0,1,\cdots
\end{eqnarray*}
with $\eta$ a fixed parameter.
We observe that although a high accuracy can be obtained by Slater-type atomic orbitals with very few degrees of freedom, 
a better systematically convergence rate is achieved by polynomials.

\begin{figure}[htb!]
\begin{minipage}[t]{0.5\linewidth}
\centering
\includegraphics[width=6.5cm]{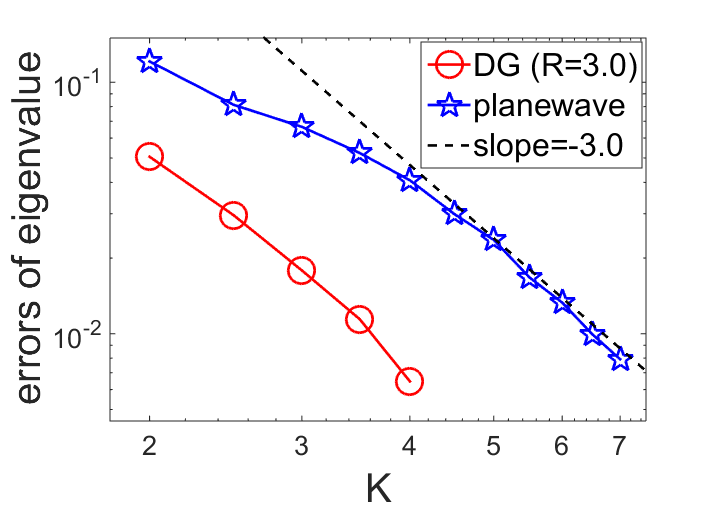}
\caption{(Example 1) Numerical errors of plane waves and DG approximations in the single-atom system.}
\label{fig-pw-dg}
\end{minipage}
\hskip 0.5cm
\begin{minipage}[t]{0.5\linewidth}
	\centering
	\includegraphics[width=6.5cm]{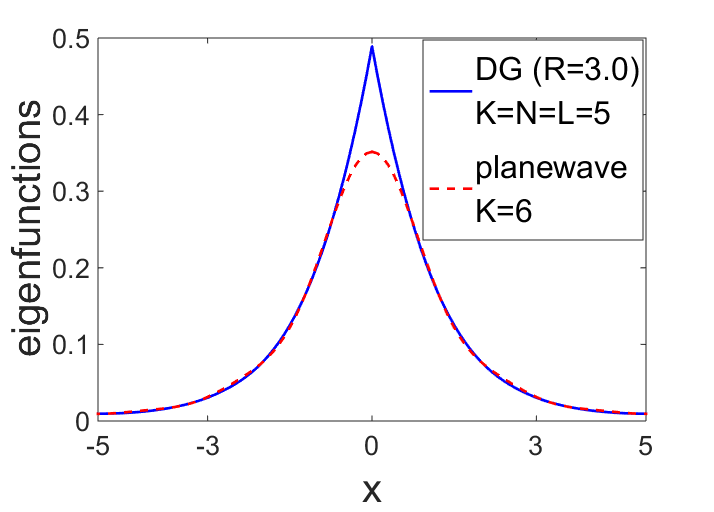}
	\caption{(Example 1) Eigenfunctions along the $x$-axis obtained by plane waves and DG discretizations.}
	\label{fig-wavefunction}
\end{minipage}
\vskip 0.2cm
	\begin{minipage}[t]{0.5\linewidth}
		\centering
		\includegraphics[width=7.0cm]{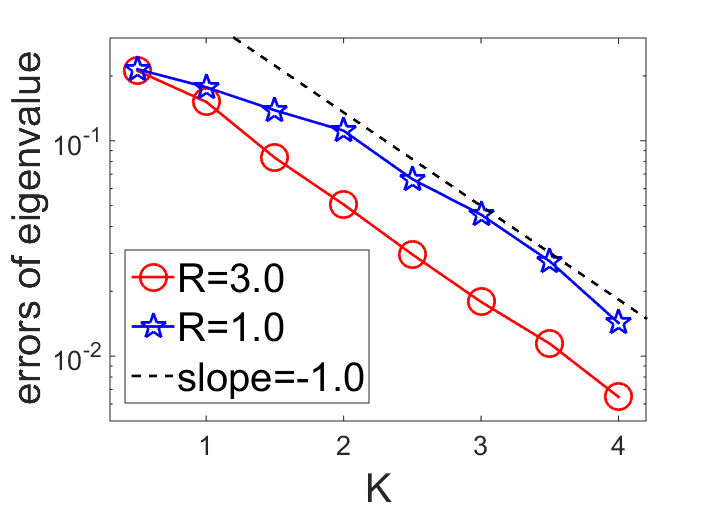}
		\caption{(Example 1) Numerical errors of DG approximations with respect to $K$.}
		\label{fig-pw-dg-2}
	\end{minipage}
\hskip 0.5cm
\begin{minipage}[t]{0.5\linewidth}
\centering
\includegraphics[width=7.0cm]{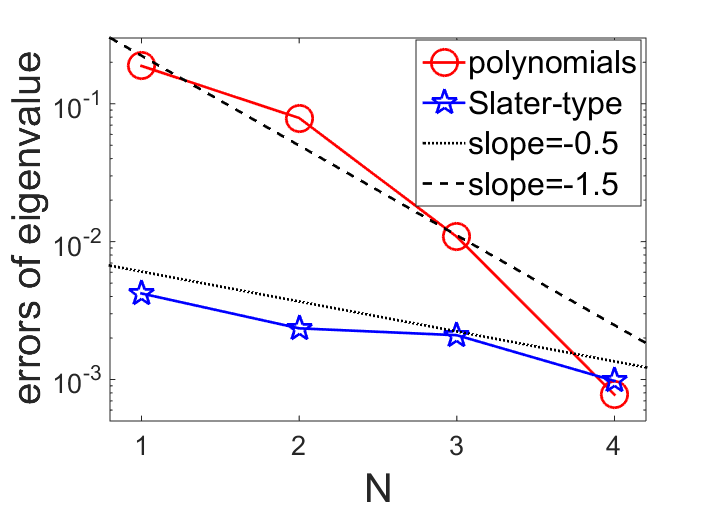}
\caption{(Example 1) Numerical errors for different types of radial basis functions.}
\label{fig-r}
\end{minipage}
\end{figure}

\noindent{\bf Example 2.} (linear problem for a two-atom system)
Consider the linear eigenvalue problem for a two-atom system: Find $\lambda\in\mathbb{R}$ and $u\in H^1_{\#}(\Omega)$ such that
\begin{eqnarray}\label{toy_model_2}
\left(-\frac{1}{2}\Delta +V_{1}+V_{2} \right)u = \lambda u,
\end{eqnarray}
where  $\Omega=[-5,5]^3$ and
$\displaystyle V_{j}({\bf r}) = -\frac{4\pi}{|\Omega|}\sum_{{\bf k}\in\mathcal{R}^*,{\bf k}\neq 0} \frac{1}{|{\bf k}|^2} e^{i{\bf k}\cdot ({\bf r-R_j})}~ (j=1,2) $
with ${\bf R_1}$ and ${\bf R_2}$ the positions of atoms.

We first compare the numerical errors of the the lowest eigenvalue approximations  by plane waves and our DG methods.
We observe a much better convergence rate with respect to $K$ in Figure \ref{fig-2atom-err} and a more accurate capture of the eigenfunction cusp by our DG approximation in Figure \ref{fig-2atom-eigenfunction}.
For a more precise comparison, when the required accuracy is $10^{-1}$, the DG method needs around 100 DOFs while the plane waves method need about 30 DOFs; when the required accuracy is $10^{-2}$, the DG method needs around 400 DOFs while the plane waves method needs more than 1000 DOFs.

We then show the convergence rates of the eigenvalue errors with respect to plane wave truncations $K$ (see Figure \ref{fig-pw-dg-2}),  and angular momentum truncation $L$ (see Figure \ref{fig-2atom-L})
We observe exponential decay with respect to both $K$ and $L$ for different sizes of atomic spheres. 
In this example, when the radii of atomic spheres are larger than 0.5, 
increasing the size of spheres does not improve the convergence rate significantly (see Figure \ref{fig-2atom-err_k}).
From the comparisons of different radii, we get the same conclusion as Example 1 that the choice of $R$ is not important in our scheme, and one can simply choose relative large radii such that the atomic spheres do not overlap.

We further show the convergence rates of numerical errors for the lowest 3 eigenvalues,
and observe exponential decay of the numerical errors with respect to $K$ (see Figure \ref{fig-2atom-eigenvalues}).
This supports our theory.

Finally, we test the effect of penalty parameter $C_{\sigma}$. 
In our DG scheme, the penalty constant $C_{\sigma}$ plays an important role to guarantee stability.
The errors with respect to different choices $C_{\sigma}$ are shown in Figure \ref{fig-err-sigma}. We observe that the DG method can be stable and accurate in a large range of values beyond a certain threshold value. 
Similar discussions for the penalty parameter can also be found in \cite{lin12}.

\begin{figure}[htb!]
	\begin{minipage}[t]{0.5\linewidth}
		\centering
		\includegraphics[width=6.5cm]{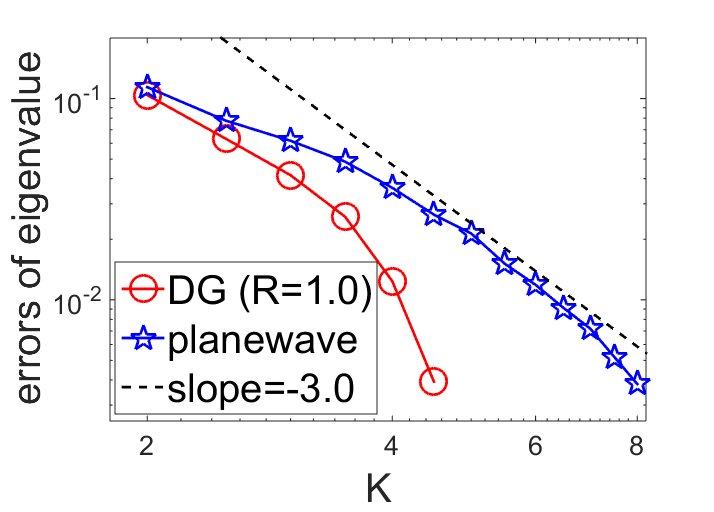}
		\caption{(Example 2) Numerical errors of plane waves and DG approximations in the two-atom system.}
		\label{fig-2atom-err}
	\end{minipage}
	\hskip 0.5cm
	\begin{minipage}[t]{0.5\linewidth}
		\centering
		\includegraphics[width=6.5cm]{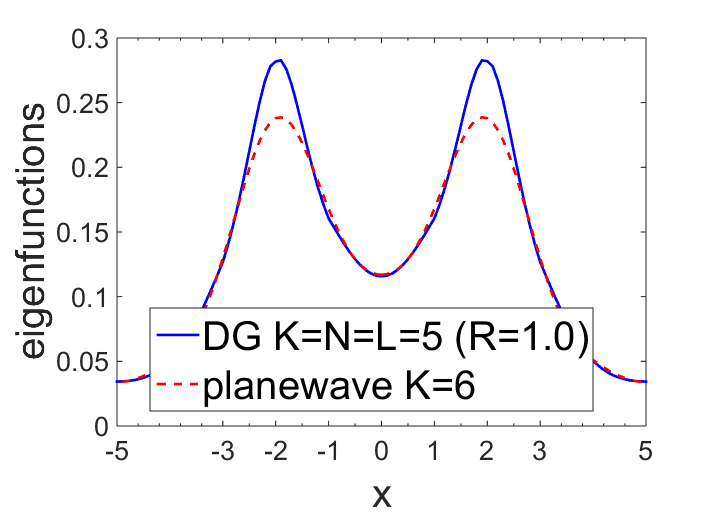}
		\caption{(Example 2) Eigenfunctions along the $x$-axis obtained by plane waves and DG discretizations.}
		\label{fig-2atom-eigenfunction}
	\end{minipage}
\vskip 0.2cm
	\begin{minipage}[t]{0.5\linewidth}
		\centering
		\includegraphics[width=6.5cm]{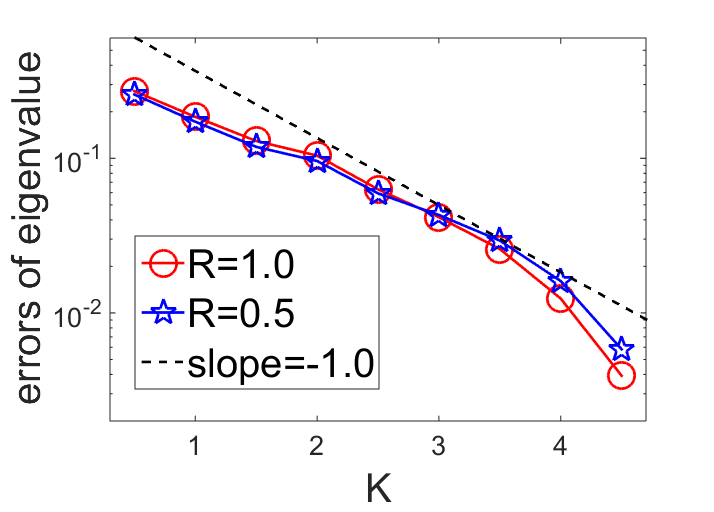}
		\caption{(Example 2) Numerical errors with respect to $K$ for different $R$ in the two-atom system.}
		\label{fig-2atom-err_k}
	\end{minipage}
	\hskip 0.5cm
	\begin{minipage}[t]{0.5\linewidth}
		\centering
		\includegraphics[width=6.5cm]{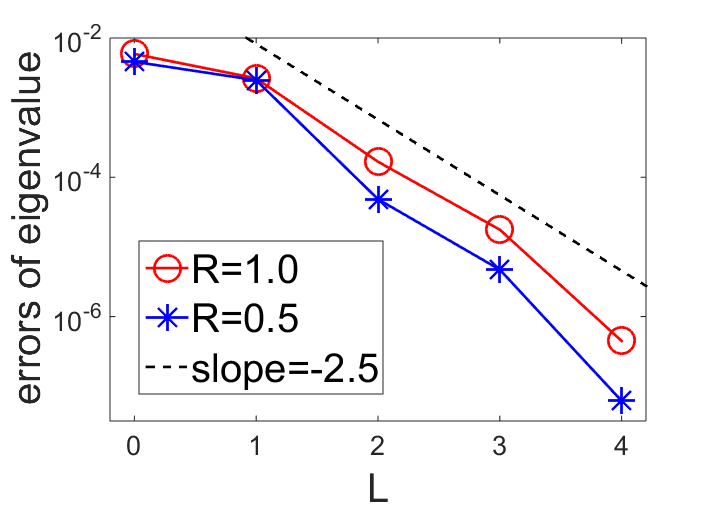}
		\caption{(Example 2) Numerical errors with respect to $L$ for different $R$ in the two-atom system.}
		\label{fig-2atom-L}
	\end{minipage}		
\vskip 0.2cm
	\begin{minipage}[t]{0.5\linewidth}
		\centering
		\includegraphics[width=6.5cm]{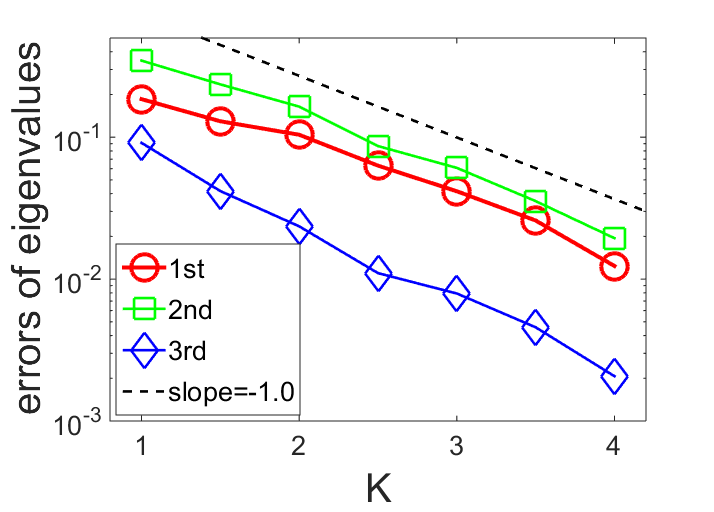}
		\caption{(Example 2) Numerical errors with respect to K for different eigenvalues in the two-atom system.}
		\label{fig-2atom-eigenvalues}
	\end{minipage}		
\hskip 0.5cm
\begin{minipage}[t]{0.5\linewidth}
\centering
\includegraphics[width=6.5cm]{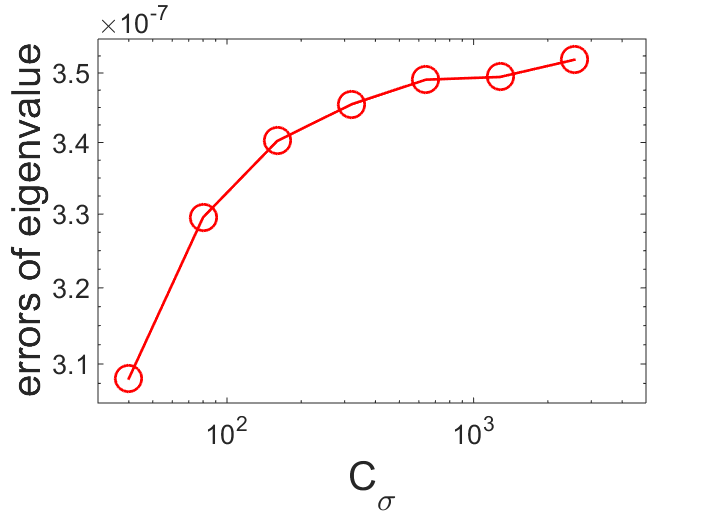}
\caption{(Example 2) Numerical errors with respect to the penalty parameter $C_{\sigma}$.}
\label{fig-err-sigma}
\end{minipage}		
\end{figure}

\noindent{\bf Example 3.} (simulation of a helium atom)
Consider the following nonlinear eigenvalue problem: Find $\lambda\in\mathbb{R}$ and $u\in H^1_{\#}(\Omega)$ such that
\begin{eqnarray}\label{toy_model_3}
\left(-\frac{1}{2}\Delta +V_{\rm ext}({\bf r})+V_{\rm H} [\rho] \right)u = \lambda u,
\end{eqnarray}
with the external potential  $\displaystyle V_{\rm ext}({\bf r})=-\frac{8\pi}{|\Omega|}\sum_{{\bf k}\in\mathcal{R}^*,{\bf k}\neq 0} \frac{e^{i{\bf k}\cdot {\bf r}}}{|{\bf k}|^2}$, the Hartree potential 
$\displaystyle V_{\rm H} [\rho]=\int_{\mathbb{R}^3}\frac{\rho({\bf y})}{|\cdot-{\bf y}|}d{\bf y}$ and $\rho=2u^2$.
Note that the exchange-correlation potential $V_{\rm xc}[\rho]$ is ignored here from a standard Kohn-Sham DFT model.
We present the eigenfunction along the $x$-axis (see Figure \ref{fig-He-eigenfunction}) 
and the convergence rates of numerical errors for different sizes of atomic spheres (see Figure \ref{fig-He-err}). 
We observe exponential decay of the numerical errors with respect to $K$ even for the nonlinear problem.

\begin{figure}[htb!]
	\begin{minipage}[t]{0.5\linewidth}
		\centering
		\includegraphics[width=6.5cm]{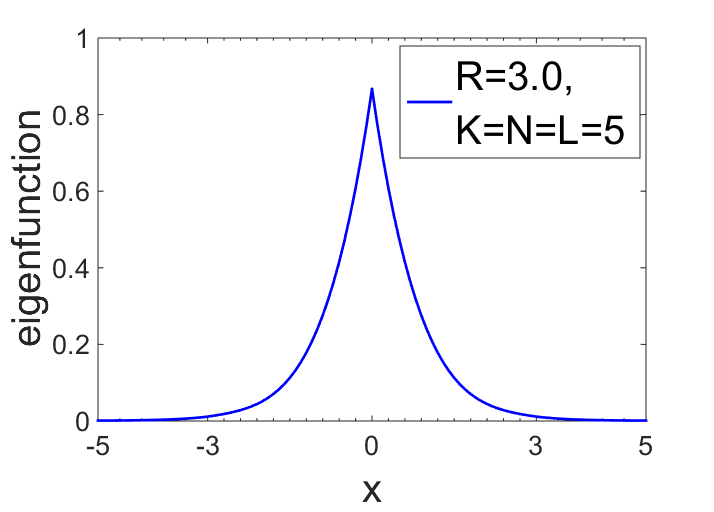}
		\caption{(Example 3) Eigenfunction along the $x$-axis obtained by DG discretizations in the helium-atom system.}
		\label{fig-He-eigenfunction}
	\end{minipage}		
	\hskip 0.5cm
	\begin{minipage}[t]{0.5\linewidth}
		\centering
		\includegraphics[width=6.5cm]{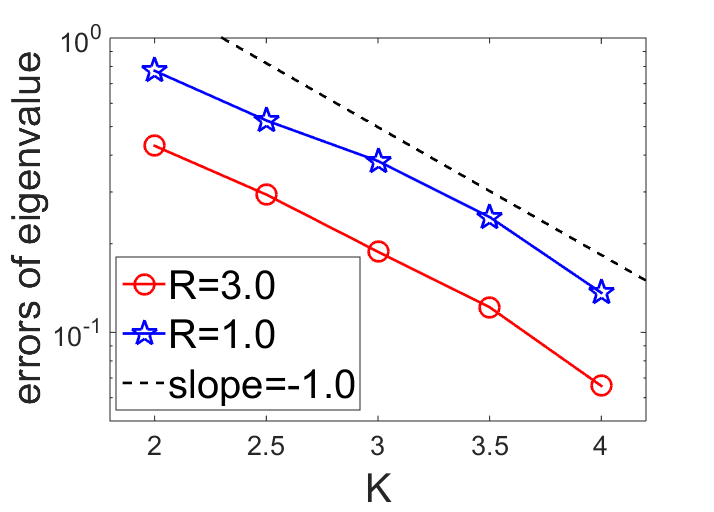}
		\caption{(Example 3) Numerical errors with respect to K for different R in the helium-atom system.}
		\label{fig-He-err}
	\end{minipage}	
\end{figure}

\section{Concluding remarks}
\label{sec-conclusions}

In this paper, we construct a discontinuous Galerkin scheme for full-potential electronic structure calculations.
It exploits the idea of augmented plane wave method which approximates the wavefunction in some ways ``the best of two worlds".
The smoothly varying parts of the wavefunctions between the atoms are represented by plane waves,
the rapidly varying parts near the nuclei are represented by radial atomic functions times spherical harmonics
inside a sphere around each nucleus, and these two parts are patched together by discontinuous Galerkin scheme.
We demonstrate {\it a priori} error estimate of this approximation to illustrate the accuracy and efficiency of this scheme,
and provide some numerical experiments to support the theory.

Besides the accuracy and efficiency we have shown in this paper,
the discontinuous Galerkin scheme is also flexible and economical for adaptive procedures
since the nonconformity results assuredly in limiting the contamination only to the subdomain where refinement is needed.
The {\it a posteriori} error analysis and the adaptive algorithm will be addressed in our future works.

\section*{Acknowledgement}
We are grateful to Reinhold Schneider in Technische Universit\"{a}t Berlin 
for his help in this work and for inspiring discussions.

\appendix
\renewcommand\thesection{\appendixname~\Alph{section}}

\section{Inverse estimates on the surface}
\label{sec:inverse-estimate}
\renewcommand{\theequation}{A.\arabic{equation}}
\renewcommand{\thetheorem}{A.\arabic{theorem}}
\renewcommand{\thelemma}{A.\arabic{lemma}}
\renewcommand{\theproposition}{A.\arabic{proposition}}
\renewcommand{\thealgorithm}{A.\arabic{algorithm}}
\renewcommand{\theremark}{A.\arabic{remark}}
\renewcommand{\thefigure}{A.\arabic{figure}}
\setcounter{equation}{0}
\setcounter{figure}{0}

In this appendix, we shall provide the numerical tests to support the inverse estimate assumption \eqref{eq-inverse-H1-assumption} and the ``interpolation" arguments to obtain \eqref{inverse-estimate}.

Let
\begin{eqnarray*}
\widetilde{V}_{\varrho}(\Gamma):=
\left\{ v_{\varrho} ~\big|~ \exists v\in\mathcal{S}^K_{NL}(\Omega) 
~{\rm such~that}~ v_{\varrho} = v^{+}+v^{-} \right\}.
\end{eqnarray*}
Consider the largest eigenvalue $\lambda_{\varrho,{\rm max}}$ of 
the following discrete eigenvalue problem on the spherical surface $\Gamma$:
Find $\lambda_{\varrho}\in\mathbb{R}$ and $u_{\varrho}\in\widetilde{V}_{\varrho}(\Gamma)$ such that
\begin{eqnarray}\label{eig-dis-gamma}
(-\Delta_{S^2}u_{\varrho},v) + (u_{\varrho},v) = \lambda_{\varrho} (u_{\varrho},v)
\qquad\forall~v\in\widetilde{V}_{\varrho}(\Gamma).
\end{eqnarray}
We perform numerical simulations for \eqref{eig-dis-gamma} and present the scalings
of $\lambda_{\varrho,{\rm max}}$ (with respect to the discretizations $\varrho$) 
in Figure \ref{fig-inverse} for different sizes of atomic spheres. 

\begin{figure}
	\centering
	\includegraphics[width=8.0cm]{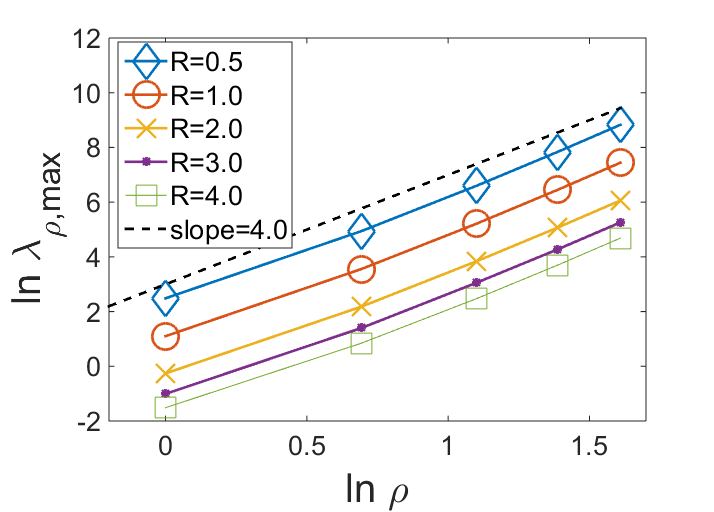}
	\caption{Scalings of the largest eigenvalue of the operator $(-\Delta_{S^2}+1)$ restricted on $\widetilde{V}_{\varrho}(\Gamma)$.}
	\label{fig-inverse}
\end{figure}	

We observe from the numerics that $\lambda_{\varrho,{\rm max}}
= C_R\varrho^4$
for all different radii $R$,  which together with the fact
\begin{eqnarray*}\label{eq-H1-eigenvalue}
	\|u\|^2_{H^1(\Gamma)}\lesssim 
	\lambda_{\varrho,{\rm max}}\|u\|^2_{L^2(\Gamma)}\quad\forall u\in\widetilde{V}_{\varrho}(\Gamma),
\end{eqnarray*}
implies
\begin{eqnarray}\label{eq-inverse-H1}
	\|u\|_{H^1(\Gamma)}\lesssim \varrho^{2}\|u\|_{L^2(\Gamma)}\quad\forall u\in\widetilde{V}_{\varrho}(\Gamma),
\end{eqnarray}
which supports the inverse estimate assumption \eqref{eq-inverse-H1-assumption}.

We then use the ``interpolation" between two spaces $L^2(\Gamma)$ and $H^1(\Gamma)$. For any $u\in L^2(\Gamma) $ and $t>0$, define 
\begin{eqnarray}\label{def-K}
 K(t,u)=\inf_{v\in H^1(\Gamma)}(\|u-v\|_{L^2(\Gamma)}+t\|v\|_{H^1(\Gamma)})
\end{eqnarray}
and the $H^{\frac{1}{2}}$-norm through ``interpolation" \cite{brenner08}:
$\displaystyle\|u\|_{H^\frac{1}{2}(\Gamma)}=\left(\int_{0}^{\infty}\frac{K^2(t,u)}{t^2} dt \right)^{\frac{1}{2}}$. 
For any $0<\alpha<1$, we have 
\begin{eqnarray}\label{eq-interpolation}
\|u\|_{H^\frac{1}{2}(\Gamma)} \lesssim \|u\|_{L^2(\Gamma)}^{\frac{\alpha}{2}} \|u\|_{H^1(\Gamma)}^{1-\frac{\alpha}{2}}.
\end{eqnarray}
To see \eqref{eq-interpolation}, we have 
$K(t,u)\leq t\|u\|_{H^1(\Gamma)}$ by taking $v=u$ in \eqref{def-K} and
$K(t,u)\leq \|u\|_{L^2(\Gamma)}$ by choosing $v=0$. 
Using these two inequalities, we can derive
\begin{eqnarray*}
	\|u\|^2_{H^{\frac{1}{2}}(\Gamma)}
	&=& \int_{0}^{1}\frac{K^2(t,u)}{t^2} dt + \int_{1}^{\infty}\frac{K^2(t,u)}{t^2} dt   \\[1ex]
	&\leq& \|u\|_{L^2(\Gamma)}^{\alpha}\left(\int_{0}^{1}\frac{K^{2-\alpha}(t,u)}{t^2} dt + \int_{1}^{\infty}\frac{K^{2-\alpha}(t,u)}{t^2} dt \right)  \\[1ex]
	&\leq& \|u\|_{L^2(\Gamma)}^{\alpha}\left(\int_{0}^{1}\frac{(t\|u\|_{H^1(\Gamma)})^{2-\alpha}}{t^2} dt + \int_{1}^{\infty}\frac{\|u\|_{L^2(\Gamma)}^{2-\alpha}}{t^2} dt \right)  \\[1ex]
	&\leq & C_{\alpha}\|u\|_{L^2(\Gamma)}^{\alpha}\|u\|_{H^1(\Gamma)}^{2-\alpha}\qquad 
	\forall~0<\alpha<1.
\end{eqnarray*}

Combining \eqref{eq-inverse-H1} and \eqref{eq-interpolation}, we can obtain the inverse estimate \eqref{inverse-estimate} .

\bibliographystyle{plain}
\bibliography{ref}

\begin{thebibliography}{10}

\bibitem{antonietti06}
P.F. Antonietti, A.~Buffa, and I.~Perugia.
\newblock Discontinuous {G}alerkin approximation of the {L}aplace eigenproblem.
\newblock {\em Comp. Method. Appl. M.}, 195(25):3483--3503, 2006.

\bibitem{arnold82}
D.N. Arnold.
\newblock An interior penalty finite element method with discontinuous
  elements.
\newblock {\em SIAM J. Numer. Anal.}, 19(4):742--760, 1982.

\bibitem{arnold00}
D.N. Arnold, F.~Brezzi, B.~Cockburn, and D.~Marini.
\newblock Discontinuous {G}alerkin methods for elliptic problems.
\newblock In B.~Cockburn, G.~Karniadakis, and C.W. Shu, editors, {\em
  Discontinuous Galerkin Methods. Theory, Computation and Applications},
  volume~11, pages 89--101. Springer-Verlag, 2000.

\bibitem{atkinson09}
K.~Atkinson and W.~Han.
\newblock {\em Theoretical Numerical Analysis : A Functional Analysis Framework
  3rd ed.}
\newblock Springer-Verlag, 2009.

\bibitem{babuska91}
I.~Babu\v{s}ka and J.~Osborn.
\newblock Eigenvalue problems.
\newblock In P.G. Ciarlet and J.L. Lions, editors, {\em Finite Element Methods
  (Part 1)}, volume~2 of {\em Handbook of Numerical Analysis}, pages 641--787.
  North-Holland: Elsevier Science Publishers, 1991.

\bibitem{babuska73}
I.~Babu\v{s}ka and M.~Zl\'{a}mal.
\newblock Nonconforming elements in the finite element method with penalty.
\newblock {\em SIAM J. Numer. Anal.}, 10(5):863--875, 1973.

\bibitem{bachmayr14}
M.~Bachmayr, H.~Chen, and R.~Schneider.
\newblock Error estimates for {H}ermite and even-tempered {G}aussian
  approximations in quantum chemistry.
\newblock {\em Numer. Math.}, 128(1):137--165, 2014.

\bibitem{blanc17}
X.~Blanc, E.~Canc\`{e}s, and M.S. Dupuy.
\newblock Variational projector augmented-wave method: theoretical analysis and
  preliminary numerical results.
\newblock {\em C. R. Math. Acad. Sci. Paris}, 355(6):665--670, 2017.

\bibitem{bloch94}
P.E. Bl\"{o}chl.
\newblock Projector augmented-wave method.
\newblock {\em Phys. Rev. B}, 50(24):17953--17979, 1994.

\bibitem{brenner08}
S.C. Brenner and L.R. Scott.
\newblock {\em The Mathematical Theory of Finite Element Methods 3rd ed.}
\newblock Springer-Verlag, 2008.

\bibitem{buffa08}
A.~Buffa and P.~Monk.
\newblock Error estimates for the ultra weak variational formulation of the
  {H}elmholtz equation.
\newblock {\em ESAIM Math. Model. Numer. Anal.}, 42(6):925--940, 2008.

\bibitem{cances00}
E.~Canc\`{e}s.
\newblock {SCF} algorithms for {HF} electronic calculations.
\newblock In M.~Defranceschi and C.~Le Bris, editors, {\em Mathematical Models
  and Methods for Ab Initio Quantum Chemistry}, volume~74, pages 17--43.
  Springer Berlin Heidelberg, 2000.

\bibitem{cances12}
E.~Canc\`{e}s, R.~Chakir, and Y.~Maday.
\newblock Numerical analysis of the planewave discretization of some
  orbital-free and {K}ohn–-{S}ham models.
\newblock {\em ESAIM Math. Model. Numer. Anal.}, 46(2):341--388, 2012.

\bibitem{cances16}
E.~Canc\`{e}s and N.~Mourad.
\newblock Existence of a type of optimal norm-conserving pseudopotentials for
  {K}ohn--{S}ham models.
\newblock {\em Commun. Math. Sci.}, 14(5):1315 -- 1352, 2016.

\bibitem{canuto88}
C.~Canuto, M.Y. Hussaini, A.~Quarteroni, and T.A. Zang.
\newblock {\em Spectral Methods in Fluid Dynamics}.
\newblock Springer-Verlag, 1988.

\bibitem{chatelin83}
F.~Chatelin.
\newblock {\em Spectral Approximations of Linear Operators}.
\newblock Academic Press: New York, 1983.

\bibitem{chen13}
H.~Chen, X.~Gong, L.~He, Z.~Yang, and A.~Zhou.
\newblock Numerical analysis of finite dimensional approximations of
  {K}ohn–-{S}ham models.
\newblock {\em Adv. Comput. Math.}, 38(2):225--256, 2013.

\bibitem{chen15b}
H.~Chen and R.~Schneider.
\newblock Error estimates of some numerical atomic orbitals in molecular
  simulations.
\newblock {\em Commun. Comput. Phys.}, 18(1):125--146, 2015.

\bibitem{chen15a}
H.~Chen and R.~Schneider.
\newblock Numerical analysis of augmented plane wave methods for full-potential
  electronic structure calculations.
\newblock {\em ESAIM Math. Model. Numer. Anal.}, 49(3):755--785, 2015.

\bibitem{drescher14}
L.~Drescher.
\newblock A mortar finite element method for full-potential {K}ohn-{S}ham
  density functional theory.
\newblock {\em Mather thesis of Science ETH in Physics}, 2014.

\bibitem{egorov97}
Y.V. Egorov and B.W. Schulze.
\newblock {\em Pseudo-differential Operators, Singularities, Applications}.
\newblock Birkhäuser, Basel, 1997.

\bibitem{flad08}
H.J. Flad, R.~Schneider, and B.W. Schulze.
\newblock Asymptotic regularity of solutions to {H}artree-–{F}ock equations
  with {C}oulomb potential.
\newblock {\em Math. Meth. Appl. Sci.}, 31(18):2172--2201, 2008.

\bibitem{fournais02}
S.~Fournais, M.~Hoffmann-Ostenhof, T.~Hoffmann-Ostenhof, and
  T.~\O{}stergaard~S\o{}rensen.
\newblock The electron density is smooth away from the nuclei.
\newblock {\em Commun. Math. Phys.}, 228(3):401--415, 2002.

\bibitem{fournais04}
S.~Fournais, M.~Hoffmann-Ostenhof, T.~Hoffmann-Ostenhof, and
  T.~\O{}stergaard~S\o{}rensen.
\newblock Analyticity of the density of electronic wavefunctions.
\newblock {\em Ark. Mat.}, 42(1):87--106, 2004.

\bibitem{fournais07}
S.~Fournais, M.~Hoffmann-Ostenhof, T.~Hoffmann-Ostenhof, and
  T.~\O{}stergaard~S\o{}rensen.
\newblock Non-isotropic cusp conditions and regularity of the electron density
  of molecules at the nuclei.
\newblock {\em Annales Henri Poincar\'{e}}, 8(4):731--748, 2007.

\bibitem{harriman03}
K.~Harriman, P.~Houston, B.~Senior, and E.~S\"{u}lin.
\newblock hp-version discontinuous {G}alerkin methods with interior penalty for
  partial differential equations with nonnegative characteristic form.
\newblock In S.Y. Cheng, C.W. Shu, and T.~Tang, editors, {\em Recent Advances
  in Scientific Computing and Partial Differential Equations}, pages 89--120.
  Springer-Verlag, 2003.

\bibitem{herring40}
C.~Herring.
\newblock A new method for calculating wave functions in crystals.
\newblock {\em Phys. Rev.}, 57(12):1169--1177, 1940.

\bibitem{hoffmann01}
M.~Hoffmann-Ostenhof, T.~Hoffmann-Ostenhof, and T.~\O{}stergaard~S\o{}rensen.
\newblock Electron wavefunctions and densities for atoms.
\newblock {\em Annales Henri Poincaré}, 2(1):77--100, 2001.

\bibitem{Kato66}
T.~Kato.
\newblock {\em Perturbation theory for linear operators}.
\newblock Springer-Verlag, 1966.

\bibitem{lebris03}
C.~Le~Bris.
\newblock {\em Handbook of Numerical Analysis}, volume X, special issue:
  Computational Chemistry.
\newblock North-Holland, 2003.

\bibitem{lin12}
L.~Lin, J.~Lu, L.~Ying, and W.~E.
\newblock Adaptive local basis set for {K}ohn--{S}ham density functional theory
  in a discontinuous {G}alerkin framework {I}: {T}otal energy calculation.
\newblock {\em J. Comput. Phys.}, 231(4):2140 -- 2154, 2012.

\bibitem{maday81}
Y.~Maday and A.~Quarteroni.
\newblock Legendre and {C}hebyshev spectral approximations of {B}urgers'
  equation.
\newblock {\em Numer. Math.}, 37(3):321--332, 1981.

\bibitem{madsen01}
G.K.H. Madsen, P.~Blaha, K.~Schwarz, E.~Sj\"{o}stedt, and L.~Nordstr\"{o}m.
\newblock Efficient linearization of the augmented plane-wave method.
\newblock {\em Phys. Rev. B}, 64:195134, 2001.

\bibitem{martin05}
R.M. Martin.
\newblock {\em Electronic Structure: Basic Theory and Practical Methods}.
\newblock Cambridge University Press, 2005.

\bibitem{messiah64}
A.~Messiah.
\newblock {\em Quantum Mechanics}, volume~I.
\newblock Wiley, NewYork, 1964.

\bibitem{osborn75}
J.E. Osborn.
\newblock Spectral approximation for compact operators.
\newblock {\em Math. Comput.}, 29(131):712--725, 1975.

\bibitem{schwarz02}
K.~Schwarz, P.~Blaha, and G.K.H. Madsen.
\newblock Electronic structure calculations of solids using the {WIEN}2k
  package for material sciences.
\newblock {\em Comput. Phys. Commun.}, 147(1):71--76, 2002.

\bibitem{singh06}
D.J. Singh and L.~Nordstrom.
\newblock {\em Planewaves, Pseudopotentials, and the LAPW Method}.
\newblock Springer, 2006.

\bibitem{sjostedt00}
E.~Sj\"{o}stedt, L.~Nordstr\"{o}m, and D.J. Singh.
\newblock An alternative way of linearizing the augmented plane-wave method.
\newblock {\em Solid State Commun.}, 114(1):15 -- 20, 2000.

\bibitem{slater37}
J.C. Slater.
\newblock Wave functions in a periodic potential.
\newblock {\em Phys. Rev.}, 51(10):846--851, 1937.

\bibitem{suli00}
E.~S\"{u}li, C.~Schwab, and P.~Houston.
\newblock hp-{DGFEM} for partial differential equations with nonnegative
  characteristic form.
\newblock In B.~Cockburn, G.~Karniadakis, and C.W. Shu, editors, {\em
  Discontinuous Galerkin Finite Element Methods. Theory, Computation and
  Applications}. Springer-Verlag, 2000.

\bibitem{zhang17}
G.~Zhang, L.~Lin, W.~Hu, C.~Yang, and J.E. Pask.
\newblock Adaptive local basis set for {K}ohn-{S}ham density functional theory
  in a discontinuous {G}alerkin framework {II}: {F}orce, vibration, and
  molecular dynamics calculations.
\newblock {\em J. Comput. Phys.}, 335:426 -- 443, 2017.

\end{thebibliography}

\end{document}